\newtheorem{theorem}{Theorem}
\newtheorem{definition}[theorem]{Definition}
\newtheorem{example}[theorem]{Example}
\newtheorem{lemma}[theorem]{Lemma}
\newtheorem{proposition}[theorem]{Proposition}
\newtheorem{remark}[theorem]{Remark}
\begin{document}

\title{Almost Automorphic Hyperfunctions }
\author{Chikh BOUZAR and Amel BOUDELLAL }
\address{ Laboratory of Mathematical Analysis and Applications. Oran University 1 A.B.B., Algeria.}
\email{ch.bouzar@gmail.com}
\date{}

\begin{abstract}
Almost automorphy in the context of hyperfunctions is the main aim of this
work. We give different equivalent definitions of almost automorphic
hyperfunctions and then we study this class of hyperfunctions.
\end{abstract}
\maketitle
\section{Introduction}

Almost automorphy is due to S. Bochner, it appeared for the first time in
\cite{Boch56}, a study of almost automorphic classical functions is tackled
in his papers \cite{Boch61},\cite{Boch62},\cite{Boch64},
 where some of their main
properties are given. A more general study of almost automorphy is done in
\cite{v}. An example of an almost automorphic function which is not almost
periodic is given in \cite{v2}, which is a confirmation that almost
automorphy is more general than almost periodicity studied by H. Bohr, see
\cite{bohr}.

Almost automorphy in the framework of Sobolev-Schwartz distributions has
been tackled in the paper \cite{BZR}, where the problem of existence of
distributional almost automorphic solutions of linear differential equations
is considered. The class of almost automorphic distributions extends almost
periodic distributions introduced and studied by L. Schwartz in \cite{s} to
generalize the Stepanoff almost periodic functions \cite{stepa}. In the same
way as the papers \cite{BKT},\cite{BZR} the work \cite{kost} deals with
almost automorphic\ non quasi-analytic ultradistributions extending the
almost periodicity for ultradistributions, the subject of the work \cite{cio}%
, where an application to the Dirichlet problem is done. It is worth noting
that concept of ultradistributions is strictly larger than the one of
Sobolev-Schwartz distributions, see \cite{koma}.

Hyperfunctions of M. Sato \cite{sato},\cite{sato2}, extend the concepts of
classical functions, distributions and ultradistributions. They are
important tools in the study of ordinary and partial differential equations.
Fourier hyperfunctions are the subject of the paper \cite{ka}. Almost
periodic Fourier hyperfunctions, using the approach to hyperfunctions
proposed by T. Matsuzawa in \cite{matzu1},\cite{matzu2},\cite{matzu3}, are the subject
of the work \cite{Fap}.

The main aim of this work is to introduce and to study almost automorphy in
the context of Fourier hyperfunctions. In a forthcoming work, the
existence of almost automorphic hyperfunction solutions of some
functional-differential equations will be considered.

The paper is organised as follows : in the second section we first recall
the definition and some properties of almost automorphic classical functions, and also
the function space $\mathcal{D}_{L^{p}}^{\left\{ 1\right\} }$, then we
introduce the space of real analytic almost automorphic functions and give
their main properties. In the third section, we recall the
space of test functions for Fourier hyperfunctions, the space of $L^{p}$%
-hyperfunctions and we show some of their important results, see \cite{Fap},\cite{
Perio},\cite{Gelfand},\cite{Pili2}. In the last section, we introduce almost automorphic
hyperfunctions by proving a principal theorem characterising the concept of
almost automoprphy in the context of hyperfunctions, and then a study of
their properties is given.

\section{Real analytic almost automorphic functions}

We consider complex valued functions defined on the whole space of real
numbers $%
\mathbb{R}
$. Let $\mathcal{C}_{b}$ denotes the space of continuous bounded functions
on $%
\mathbb{R}
$ endowed with the norm $\Vert .\Vert _{\infty }$ of uniform convergence on $%
\mathbb{R}$. It is well-known that $\ (\mathcal{C}_{b},\Vert .\Vert _{\infty
})$ is a Banach algebra.

\begin{definition}
(Bochner \cite{Boch61}) A complex-valued function $f$ defined and continuous
on $\mathbb{R}$ is called almost automorphic, if for any real sequence $%
(s_{m})_{m\in \mathbb{N}}\subset \mathbb{R}$, one can extract a subsequence $%
(s_{m_{k}})_{k}$ such that
\begin{equation}
g(x):=\lim\limits_{k\rightarrow +\infty }f{(x+s_{m_{k}})}\ \ \text{is
well-defined for every}\ \ x\in \mathbb{R},\text{ \ }  \label{caa11}
\end{equation}%
and \
\begin{equation}
\lim\limits_{k\rightarrow +\infty }{g(x-s_{m_{k}})}=f(x)\ \ \text{for every }%
\ \ x\in \mathbb{R}\text{ }  \label{caa2}
\end{equation}%
The space of almost automorphic functions on $\mathbb{R}$ is denoted by $%
\mathcal{C}_{aa}$.
\end{definition}

\begin{remark}
The function $g$ in the above definition is not necessarily continuous but
it is measurable and bounded, so locally integrable.
\end{remark}

\begin{remark}
\label{limlim}In \cite{Boch62} is proved that a continuous function$~f~$is
almost automorphic if and only if for any sequence of real numbers $\left(
s_{m}\right) _{m\in
\mathbb{N}
},$ one can extract a subsequence $\left( s_{m_{k}}\right) _{k}$ such that%
\begin{equation*}
\lim_{l\rightarrow +\infty }\lim_{k\rightarrow +\infty }f\left(
x+s_{m_{k}}-s_{m_{l}}\right) =f\left( x\right) ,\forall x\in \mathbb{R}
\end{equation*}
\end{remark}

For the following results see \cite{Boch61},\cite{Boch64},\cite{BZR},\cite{v}.

\begin{proposition}
\label{pro}

\begin{enumerate}
\item \label{pro1}The space $\mathcal{C}_{aa}$ is a Banach subalgebra of \ $%
\mathcal{C}_{b}$ invariant by translations.

\item \label{pro4}If $f\in \mathcal{C}_{aa}$ and $g\in L^{1},$ then the
convolution $f\ast g\in \mathcal{C}_{aa}.$

\item \label{pro5}Let $f\in \mathcal{C}_{aa}$, if $f^{\prime }$ exists and
is uniformly continuous, then $f^{\prime }\in \mathcal{C}_{aa}.$

\item \label{pro6}A primitive of an almost automorphic function is almost
automorphic if and only if it is bounded.
\end{enumerate}
\end{proposition}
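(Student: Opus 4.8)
I would prove the four items in order, since each relies on the Bochner characterizations recalled above (the diagonal/subsequence definition and the double-limit criterion of Remark \ref{limlim}). For item \ref{pro1}, the plan is to first check that $\mathcal{C}_{aa}$ is a linear subspace closed under products: given $f,g\in\mathcal{C}_{aa}$ and a real sequence $(s_m)$, I would extract a subsequence working simultaneously for $f$ and for $g$ (first pass for $f$, then a further subsequence for $g$), so that both limit functions exist pointwise and the reverse limits return $f$ and $g$; linearity and multiplicativity of pointwise limits then give $\alpha f+\beta g, fg\in\mathcal{C}_{aa}$. Boundedness is immediate from the definition (an unbounded $f$ would have a sequence $x_m$ with $|f(x_m)|\to\infty$, and applying the definition to $s_m=x_m$ would force $|g(0)|=\infty$). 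For completeness I would show $\mathcal{C}_{aa}$ is closed in $(\mathcal{C}_b,\|\cdot\|_\infty)$: if $f_n\to f$ uniformly with $f_n\in\mathcal{C}_{aa}$, a diagonal extraction over $n$ produces a single subsequence $(s_{m_k})$ along which every $f_n(\cdot+s_{m_k})$ converges; an $\varepsilon/3$ argument then shows $f(\cdot+s_{m_k})$ converges pointwise to some $g$ and that the reverse limit recovers $f$. Translation invariance is trivial since $\tau_a f(\cdot+s_{m_k})=f(\cdot+a+s_{m_k})$ and one applies the definition to the shifted argument.

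For item \ref{pro4}, given $f\in\mathcal{C}_{aa}$ and $g\in L^1$, I would show $f*g\in\mathcal{C}_b$ first (standard: $\|f*g\|_\infty\le\|f\|_\infty\|g\|_1$, and continuity by dominated convergence), then fix a real sequence $(s_m)$ and extract $(s_{m_k})$ as in the definition for $f$, with pointwise limit $h$. Writing $(f*g)(x+s_{m_k})=\int f(x+s_{m_k}-t)g(t)\,dt$, the integrand converges pointwise in $t$ to $h(x-t)g(t)$ and is dominated by $\|f\|_\infty|g(t)|\in L^1$, so dominated convergence gives $(f*g)(x+s_{m_k})\to (h*g)(x)$ for every $x$; the same argument with $-s_{m_k}$ and the reverse convergence $h(\cdot-s_{m_k})\to f$ gives $(h*g)(x-s_{m_k})\to(f*g)(x)$. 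Hence $f*g\in\mathcal{C}_{aa}$; note this is where the remark that $g$ (here the limit function) is merely measurable and bounded, hence locally integrable, is used so that $h*g$ makes sense.

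For item \ref{pro5}, assuming $f\in\mathcal{C}_{aa}$ with $f'$ existing and uniformly continuous, I would use the mean-value/difference-quotient approach: for $h\ne0$ the function $f_h(x):=\frac{f(x+h)-f(x)}{h}$ lies in $\mathcal{C}_{aa}$ by item \ref{pro1} and translation invariance, and $f_h\to f'$ uniformly on $\mathbb{R}$ as $h\to0$ precisely because $f'$ is uniformly continuous (write $f_h(x)=\int_0^1 f'(x+th)\,dt$ and estimate the modulus of continuity). Then closedness of $\mathcal{C}_{aa}$ from item \ref{pro1} yields $f'\in\mathcal{C}_{aa}$. For item \ref{pro6}, let $F$ be a primitive of $f\in\mathcal{C}_{aa}$. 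One direction is trivial since $\mathcal{C}_{aa}\subset\mathcal{C}_b$. For the converse, assume $F$ bounded; I would use the double-limit criterion of Remark \ref{limlim}. Given $(s_m)$, extract $(s_{m_k})$ so that both $F(x+s_{m_k})$ and $f(x+s_{m_k})$ converge pointwise (diagonal extraction), with $f$ additionally satisfying the reverse-limit property; call the limit of the translates of $F$ the function $G$. Since $F'=f$ and the translates $F(\cdot+s_{m_k})$ are uniformly bounded with derivatives $f(\cdot+s_{m_k})$ converging boundedly, one shows (integrating: $F(x+s_{m_k})-F(s_{m_k})=\int_0^x f(y+s_{m_k})\,dy$, but the issue is the unknown behaviour of the constants $F(s_{m_k})$) that $G$ is a primitive of $\lim f(\cdot+s_{m_k})$. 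The delicate point, and what I expect to be the main obstacle, is controlling the additive constants $F(s_{m_k})$: boundedness of $F$ keeps them in a compact set, so a further subsequence makes them converge, and then the reverse double limit for $f$ forces the corresponding double limit for $G$ to return $F$. Making this constant-tracking rigorous — essentially showing that the double-limit condition transfers from $f$ to its bounded primitive — is the crux; everything else is dominated convergence and uniform-limit bookkeeping.
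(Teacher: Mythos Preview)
The paper does not prove Proposition~\ref{pro}: it simply cites \cite{Boch61}, \cite{Boch64}, \cite{BZR}, \cite{v}. Your proposal therefore goes beyond what the paper supplies, and for items (1)--(3) your sketches are correct and standard (diagonal extraction for sums, products and closedness; dominated convergence for the convolution; uniform convergence of difference quotients via $f_h(x)=\int_0^1 f'(x+th)\,dt$ for the derivative).

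For item (4), however, there is a real gap precisely at the point you call ``the crux.'' After extracting $(s_{m_k})$ so that $f(\cdot+s_{m_k})\to h$, $h(\cdot-s_{m_k})\to f$, and $F(s_{m_k})\to c$, you correctly get $F(x+s_{m_k})\to G(x):=c+\int_0^x h$, and then
\[
G(x-s_{m_k})=G(-s_{m_k})+\int_0^x h(z-s_{m_k})\,dz,
\]
where the integral tends to $F(x)-F(0)$ by dominated convergence. But the almost automorphy of $f$ controls only this integral term; it says nothing about $G(-s_{m_k})$. Your assertion that ``the reverse double limit for $f$ forces the corresponding double limit for $G$ to return $F$'' is not justified: along any subsubsequence with $G(-s_{m_{k_j}})\to d$ one obtains $G(x-s_{m_{k_j}})\to F(x)+(d-F(0))$, and nothing in your argument forces $d=F(0)$. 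The missing ingredient is a range argument. Since $G$ is a pointwise limit of translates of $F$, one has (treating real and imaginary parts separately) $\inf F\le G\le \sup F$; if some subsubsequence gave $G(\cdot-s_{m_{k_j}})\to F+\delta$ with $\delta\neq 0$, then $F+\delta$, being a pointwise limit of translates of $G$, would inherit the same bounds $\inf F\le F+\delta\le\sup F$, contradicting the definition of $\sup F$ (if $\delta>0$) or of $\inf F$ (if $\delta<0$). Hence every convergent subsubsequence yields $\delta=0$, and the standard subsequence principle then gives $G(x-s_{m_k})\to F(x)$ along the full extracted sequence. With this step inserted, your outline for item (4) becomes a complete proof.
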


For more details on the following spaces and their properties see \cite%
{Perio, Pili2}. Let $p\in \left[ 1,+\infty \right] ,$ and define
\begin{equation*}
\mathcal{D}_{L^{p},h}^{\left\{ 1\right\} }=\left\{ \varphi \ \in C^{\infty
}:\left\Vert \varphi \right\Vert _{L^{p},h}:=\sup_{_{j\in
\mathbb{Z}
_{+}}}\frac{\left\Vert \varphi ^{(j)}\right\Vert _{p}}{h^{j}j!}<\infty
\right\} ,h>0,
\end{equation*}%
then $\left( \mathcal{D}_{L^{p},h}^{\left\{ 1\right\} },\left\Vert
.\right\Vert _{L^{p},h}\right) $ is a Banach space. The space $\mathcal{D}%
_{L^{p}}^{\left\{ 1\right\} }$:=$\cup _{h>0}$ $\mathcal{D}%
_{L^{p},h}^{\left\{ 1\right\} }$ endowed with the topology of inductive
limit of the Banach spaces $\left( \mathcal{D}_{L^{p},h}^{\left\{ 1\right\}
}\right) _{h>0}$ is a $\left( DF\right) $-space.

\begin{definition}
The space of real analytic almost automorphic functions on $
\mathbb{R}$ is denoted and defined by
\begin{equation*}
\mathcal{B}_{aa}^{\left\{ 1\right\} }= \{f\in C^{\infty }\left(\mathbb{R}
\right) :\forall j\in \mathbb{Z}
_{+},f^{(j)}\in \mathcal{C}_{aa}\text{ }\text{and}\text{ } \exists h>0,\exists
C>0,\forall j\in
\mathbb{Z}
_{+},\|f^{(j)}\|_{\infty }\leq Ch^{j}j!\}
\end{equation*}
\end{definition}

\begin{proposition}

\label{baa1}Let $f\in C^{\infty },$ the following statements are equivalent
\end{proposition}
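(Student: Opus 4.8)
The plan is to prove the listed conditions equivalent by a short cycle of implications, the workhorse being the classical observation that a bound of the form $\left\Vert f^{(j)}\right\Vert _{\infty }\leq Ch^{j}j!$ for all $j$ forces $f$ to extend to a bounded holomorphic function on the strip $S_{r}=\left\{ z\in \mathbb{C}:\left\vert \operatorname{Im}z\right\vert <r\right\} $ for every $r<1/h$, together with Proposition \ref{pro}. The implication $f\in \mathcal{B}_{aa}^{\left\{ 1\right\} }\Longrightarrow f\in \mathcal{C}_{aa}$ and $f\in \mathcal{D}_{L^{\infty }}^{\left\{ 1\right\} }$ is immediate from the definition, so the only real content is to recover membership in $\mathcal{B}_{aa}^{\left\{ 1\right\} }$, and the translate characterisation, from the weaker hypotheses.

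First I would show that ``$f\in \mathcal{C}_{aa}$ and $f$ satisfies the analytic bound'' already gives $f\in \mathcal{B}_{aa}^{\left\{ 1\right\} }$, i.e. every derivative is almost automorphic. The bound gives $\left\Vert f^{\prime \prime }\right\Vert _{\infty }<\infty $, hence $f^{\prime }$ is Lipschitz, in particular uniformly continuous; since $f\in \mathcal{C}_{aa}$, Proposition \ref{pro}(\ref{pro5}) yields $f^{\prime }\in \mathcal{C}_{aa}$. As $f^{\prime }$ obeys an analytic bound of the same type (same $h$, a slightly larger constant), an induction on $j$ gives $f^{(j)}\in \mathcal{C}_{aa}$ for all $j$, whence $f\in \mathcal{B}_{aa}^{\left\{ 1\right\} }$.

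Next, the characterisation in terms of convergence of translates in the topology of $\mathcal{D}_{L^{\infty }}^{\left\{ 1\right\} }$. Given a real sequence $\left( s_{m}\right) _{m}$, I would first pass to the subsequence $\left( s_{m_{k}}\right) _{k}$ furnished by Bochner's definition applied to $f$, so that (\ref{caa11}) and (\ref{caa2}) hold, and then extend $f$ holomorphically and boundedly to some $S_{r}$. The functions $z\mapsto f(z+s_{m_{k}})$ form a locally bounded family of holomorphic functions on $S_{r}$; by Montel's theorem a further subsequence converges, uniformly on compact subsets of $S_{r}$, to a holomorphic $\widetilde{g}$ on $S_{r}$, which on $\mathbb{R}$ must coincide with the limit $g$ of (\ref{caa11}). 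Thus $g$ is real analytic, extends to $S_{r}$ with the same bound, and the Cauchy integral formula converts compact convergence on $S_{r}$ into: for every $r^{\prime }<r$ there is $C^{\prime }>0$ with $\left\Vert f^{(j)}(\cdot +s_{m_{k}})-g^{(j)}\right\Vert _{\infty }\rightarrow 0$ as $k\rightarrow +\infty $ and $\left\Vert f^{(j)}(\cdot +s_{m_{k}})\right\Vert _{\infty },\left\Vert g^{(j)}\right\Vert _{\infty }\leq C^{\prime }(1/r^{\prime })^{j}j!$ for all $j$. A domination argument over $j\in \mathbb{Z}_{+}$ (finitely many initial terms tend to $0$, the tail is uniformly small thanks to the common bound with a strictly larger $h^{\prime }>1/r^{\prime }$) then gives $f(\cdot +s_{m_{k}})\rightarrow g$ in the Banach space $\mathcal{D}_{L^{\infty },h^{\prime }}^{\left\{ 1\right\} }$, hence in $\mathcal{D}_{L^{\infty }}^{\left\{ 1\right\} }$. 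The reverse convergence $g(\cdot -s_{m_{k}})\rightarrow f$ is obtained in the same way, using (\ref{caa2}) to identify the limit; the double-limit form of Remark \ref{limlim} for $f$ and all its derivatives follows by composing the two convergences. Conversely, convergence of translates in $\mathcal{D}_{L^{\infty }}^{\left\{ 1\right\} }$ implies the pointwise conditions (\ref{caa11})--(\ref{caa2}) for $f$ and for every $f^{(j)}$, and boundedness of $\left\Vert f^{(j)}\right\Vert _{\infty }/(h^{j}j!)$ in $j$, i.e. $f\in \mathcal{B}_{aa}^{\left\{ 1\right\} }$.

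The step I expect to be delicate is the last domination argument: the successive radius losses $r\to r^{\prime }\to h^{\prime }$ must be fixed once and for all, independently of $k$ and $j$, so that $\sup_{j\geq J}\left\Vert f^{(j)}(\cdot +s_{m_{k}})-g^{(j)}\right\Vert _{\infty }/(h^{\prime j}j!)$ is controlled uniformly in $k$ while the finitely many terms with $j<J$ go to zero by the compact convergence on the strip. A second point requiring care is that $g$ in Bochner's definition is a priori only measurable and bounded; here it is forced to be real analytic by the Montel/strip argument, and this is exactly what makes $g$ an admissible limit in $\mathcal{D}_{L^{\infty }}^{\left\{ 1\right\} }$. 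Finally, should the statement be phrased with a general $p\in \left[ 1,+\infty \right] $, the same strip estimates show that for functions of this class the $L^{p}$-type seminorms $\left\Vert \cdot \right\Vert _{L^{p},h}$ are all finite and mutually controlled after an arbitrarily small enlargement of $h$, so the equivalence is insensitive to $p$.
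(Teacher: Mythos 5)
Your reduction of condition 2 to condition 1 (bounded $f''$ gives Lipschitz, hence uniformly continuous, $f'$, so $f'\in\mathcal{C}_{aa}$ by Proposition \ref{pro}-(\ref{pro5}), then induct on $j$) is correct, and is arguably cleaner than the paper's route, which instead identifies the pointwise limits of the translated derivatives with the derivatives of the limit function via the difference quotients $\int_{0}^{1}f'(\cdot+\rho_{m_k}+\theta\sigma_n)\,d\theta$ and an interchange of iterated limits, after a diagonal extraction over $j$. The holomorphic-extension idea is also sound as far as it goes: the bound $\|f^{(j)}\|_\infty\le Ch^j j!$ does give a bounded holomorphic extension to a strip, Montel does give a subsequence of the translates converging uniformly on compact subsets of the strip, and this already yields the pointwise convergence $f^{(j)}(x+s_{m_k})\to\tilde f^{(j)}(x)$ for every $x$ and every $j$, together with $\|\tilde f^{(j)}\|_\infty\le C'h'^{\,j}j!$, i.e. $\tilde f\in\mathcal{D}_{L^\infty}^{\{1\}}$.

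There is, however, a genuine error in the final ``domination argument.'' You read condition 3 as convergence of the translates in the topology of $\mathcal{D}_{L^\infty}^{\{1\}}$ and claim that compact convergence on the strip upgrades, via the Cauchy integral formula, to $\|f^{(j)}(\cdot+s_{m_k})-g^{(j)}\|_\infty\to0$ with the supremum taken over all of $\mathbb{R}$. This fails already for $j=0$: uniform convergence on $\mathbb{R}$ of a subsequence of every sequence of translates is exactly Bochner's criterion for almost periodicity, so your conclusion would force every element of $\mathcal{B}_{aa}^{\{1\}}$ to be almost periodic, contradicting the strict inclusion $\mathcal{C}_{ap}\subsetneq\mathcal{C}_{aa}$ that the paper exploits; the Cauchy formula only transports uniform convergence on compacta to uniform convergence of derivatives on smaller compacta, never to global uniform convergence. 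The saving grace is that condition 3, as stated, asks only for pointwise convergence of each $f^{(j)}(\cdot\pm\rho_{m_k})$ together with membership of the limit in $\mathcal{D}_{L^\infty}^{\{1\}}$, which is precisely what your Montel step plus the uniform derivative bounds on a smaller strip already deliver; dropping the global domination step (and arguing the reverse limit along the full subsequence, e.g. by noting that every compactly convergent sub-subsequence of $\tilde f(\cdot-s_{m_k})$ must converge to the holomorphic extension of $f$ by (\ref{caa2}) and the identity theorem) repairs the proof.
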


\begin{enumerate}
\item $f\in \mathcal{B}_{aa}^{\left\{ 1\right\} }.$

\item \label{baa12}$f\in \mathcal{C}_{aa}\cap \mathcal{D}_{L^{\infty
}}^{\left\{ 1\right\} }.$

\item For every sequence $\left( \rho _{m}\right) _{m\in \mathbb{N}}\subset
\mathbb{R}$, there exist a subsequence $\left( \rho _{m_{k}}\right) _{k}~$%
and $\tilde{f}\in \mathcal{D}_{L^{\infty }}^{\left\{ 1\right\} }$ such that$%
~ $for every $x\in \mathbb{R}$, for every $j\in \mathbb{Z}_{+},$ we have%
\begin{equation}
\tilde{f}~^{(j)}\left( x\right) =\lim\limits_{k\rightarrow +\infty
}f^{\left( j\right) }\left( x+\rho _{m_{k}}\right) \text{ and~}%
\lim_{k\rightarrow +\infty }\tilde{f}~^{(j)}\left( x-\rho _{m_{k}}\right)
=f^{\left( j\right) }\left( x\right)  \label{equ2}
\end{equation}
\end{enumerate}

\begin{proof}
$1.\Rightarrow 2.$is obvious.

$2.\Rightarrow 3.~$Let $f\in \mathcal{C}_{aa}\cap \mathcal{D}_{L^{\infty
}}^{\left\{ 1\right\} }$, then $\forall j\in \mathbb{Z}_{+},$ $\forall
\left( \rho _{m}\right) _{m\in \mathbb{N}}\subset \mathbb{R},\exists (\rho
_{m_{j,k}})_{k}\subset \left( \rho _{m}\right) _{m},\exists (\tilde{f}%
~_{j})_{j}\subset L^{\infty }$ such that
\begin{equation*}
\forall x\in \mathbb{R},\lim_{k\rightarrow +\infty }f^{\left( j\right)
}\left( x+\rho _{m_{j,k}}\right) =:\tilde{f}~_{j}\left( x\right) ~,\
\lim_{k\rightarrow +\infty }\tilde{f}~_{j}\left( x-\rho _{m_{j,k}}\right)
=f^{\left( j\right) }\left( x\right)
\end{equation*}%
Then $\forall $ $x\in \mathbb{R},\forall \epsilon >0,\exists \eta >0,\forall
k\in \mathbb{%
\mathbb{N}
},k\geq \eta ,\left\vert f^{\left( j\right) }\left( x+\rho _{m_{j,k}}\right)
-\tilde{f}~_{j}\left( x\right) \right\vert \leq \epsilon $, so we have
\begin{eqnarray*}
\left\vert \tilde{f}~_{j}(x)\right\vert &=&\left\vert \tilde{f}%
~_{j}(x)-f^{\left( j\right) }\left( x+\rho _{m_{j,k}}\right) +f^{\left(
j\right) }\left( x+\rho _{m_{j,k}}\right) \right\vert \\
&\leq &\epsilon +\left\Vert f^{\left( j\right) }\right\Vert _{\infty },
\end{eqnarray*}%
which gives that
\begin{equation}
\left\Vert \tilde{f}~_{j}\right\Vert _{\infty }\leq Ch^{j}j!  \label{*}
\end{equation}%
For every $n\in \mathbb{Z}_{+},$ there exists a subsequence $\left( \rho
_{m_{n,k}}\right) _{k}$ of the sequence $\left( \rho _{m}\right) _{m}$ such
that%
\begin{equation*}
\forall j\leq n,~\lim_{k\rightarrow +\infty }f^{\left( j\right) }\left(
x+\rho _{m_{n,k}}\right) =\tilde{f}~_{j}\left( x\right) ,\forall x\in
\mathbb{R}\text{,}
\end{equation*}%
this result is proved in \cite{BZR2}. The map $k\longmapsto m_{k,k}$ is
strictly increasing from $\mathbb{N}$ to $\mathbb{N}.$ The sequence $\left(
\rho _{m_{k,k}}\right) _{k},$ which we denote by $\left( \rho
_{m_{k}}\right) _{k},~$is extracted from the subsequences $\left( \rho
_{m_{j,k}}\right) _{k},~j\in \mathbb{Z}_{+},$ which is in fact extracted
from the sequence $\left( \rho _{m}\right) _{m}.$ Consequently,
\begin{equation*}
\lim_{k\rightarrow +\infty }f^{\left( j\right) }\left( x+\rho
_{m_{k}}\right) =\tilde{f}~_{j}\left( x\right) \text{ exists }\forall x\in
\mathbb{R},~\forall j\in \mathbb{Z}_{+}
\end{equation*}%
With the same steps we have that
\begin{equation*}
\lim_{k\rightarrow +\infty }\tilde{f}~_{j}\left( x-\rho _{m_{k}}\right)
=f^{\left( j\right) }\left( x\right) ,~\forall x\in \mathbb{R},~\forall j\in
\mathbb{Z}_{+}
\end{equation*}%
Let $\left( \sigma _{n}\right) _{n\in \mathbb{N}}$ be a sequence~of positive
reals numbers converging to zero and consider the sequence of functions $%
\left( \phi _{n,k}\right) _{n,k\in \mathbb{N}}$ defined on $\mathbb{R~}$by
the equality
\begin{equation}
\phi _{n,k}(.)=\frac{f\left( .+\rho _{m_{k}}+\sigma _{n}\right) -f\left(
.+\rho _{m_{k}}\right) }{\sigma _{n}}=\int_{0}^{1}f^{\prime }\left( .+\rho
_{m_{k}}+\theta \sigma _{n}\right) d\theta
\end{equation}%
Since $f\in \mathcal{D}_{L^{\infty }}^{\left\{ 1\right\} },$ then $f^{\prime
}$ s bounded and uniformly continuous~on $\mathbb{R},$ so%
\begin{equation*}
\lim\limits_{k\rightarrow +\infty }\lim\limits_{n\rightarrow +\infty
}\int_{0}^{1}f^{\prime }\left( .+\rho _{m_{k}}+\theta \sigma _{n}\right)
d\theta =\lim\limits_{n\rightarrow +\infty }\lim\limits_{k\rightarrow
+\infty }\int_{0}^{1}f^{\prime }\left( .+\rho _{m_{k}}+\theta \sigma
_{n}\right) d\theta ,
\end{equation*}%
which gives that $\forall x\in \mathbb{R},$%
\begin{equation*}
\tilde{f}~_{1}\left( x\right) =\lim\limits_{k\rightarrow +\infty
}\lim\limits_{n\rightarrow +\infty }\phi _{n,k}\left( x\right)
=\lim_{n\rightarrow +\infty }\lim_{k\rightarrow +\infty }\phi _{n,k}\left(
x\right) :=\tilde{f}~_{0}^{\prime }\left( x\right)
\end{equation*}%
\newline
By iterating to all derivatives and by $\left( \ref{*}\right) $ it follows
that $\left\Vert \tilde{f}~_{0}^{(j)}\right\Vert _{\infty }=\left\Vert
\tilde{f}~_{j}\right\Vert _{\infty }$ $\leq Ch^{j}j!,$ i.e. $\tilde{f}%
~_{0}\in $ $\mathcal{D}_{L^{\infty }}^{\left\{ 1\right\} }$ such that $%
\left( \ref{equ2}\right) $ hold.

$3.\Rightarrow 1.$ Let $3$ holds, then we have $\forall j\in
\mathbb{Z}
_{+},f^{(j)}\in \mathcal{C}_{aa}$ and $\forall $ $x\in \mathbb{R},\forall
\epsilon >0,\exists \eta >0,\forall k\in \mathbb{%
\mathbb{N}
},k\geq \eta ,\left\vert \tilde{f}~^{(j)}\left( x-\rho _{m_{k}}\right)
-f^{\left( j\right) }\left( x\right) \right\vert \leq \epsilon $, so we have
\begin{eqnarray*}
\left\vert f^{(j)}(x)\right\vert &=&\left\vert f^{\left( j\right) }(x)-%
\tilde{f}~^{(j)}\left( x-\rho _{m_{k}}\right) +\tilde{f}~^{(j)}\left( x-\rho
_{m_{k}}\right) \right\vert \\
&\leq &\epsilon +\left\Vert \tilde{f}^{\left( j\right) }\right\Vert _{\infty
},
\end{eqnarray*}%
which gives that
\begin{equation}
\left\Vert f^{(j)}\right\Vert _{\infty }\leq Ch^{j}j!,
\end{equation}%
we deduce that $f\in \mathcal{B}_{aa}^{\left\{ 1\right\} }.$
\end{proof}

We give some important properties of the space $\mathcal{B}_{aa}^{\left\{
1\right\} }.$

\begin{proposition}
\label{Baa1}

\begin{enumerate}
\item $\mathcal{B}_{aa}^{\left\{ 1\right\} }$ is a closed subalgebra of $%
\mathcal{D}_{L^{\infty }}^{\left\{ 1\right\} }$ stable by translations and
derivations.

\item \label{Baa12}$\mathcal{B}_{aa}^{\left\{ 1\right\} }\ast L^{1}\subset $
$\mathcal{B}_{aa}^{\left\{ 1\right\} }.$

\item Let $f\in \mathcal{B}_{aa}^{\left\{ 1\right\} }$ and $F$ its
primitive, then $F\in \mathcal{B}_{aa}^{\left\{ 1\right\} }$ if and only if $%
F$ is bounded.
\end{enumerate}
\end{proposition}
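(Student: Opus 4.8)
The plan is to establish each of the three assertions by combining the characterization of $\mathcal{B}_{aa}^{\left\{1\right\}}$ in Proposition~\ref{baa1} (in particular item \ref{baa12}, i.e. $\mathcal{B}_{aa}^{\left\{1\right\}}=\mathcal{C}_{aa}\cap\mathcal{D}_{L^{\infty}}^{\left\{1\right\}}$) with the known closure properties of $\mathcal{C}_{aa}$ in Proposition~\ref{pro} and the elementary algebraic structure of $\mathcal{D}_{L^{\infty}}^{\left\{1\right\}}$. The general strategy throughout is: verify the property separately in $\mathcal{C}_{aa}$ and in $\mathcal{D}_{L^{\infty}}^{\left\{1\right\}}$, then intersect.

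For item $1$, I would first check that $\mathcal{D}_{L^{\infty}}^{\left\{1\right\}}$ is itself an algebra stable by translations and derivations: stability by translation and derivation is immediate from the definition of the norms $\left\Vert\cdot\right\Vert_{L^{\infty},h}$ (a derivative just shifts the index $j$, and one absorbs the combinatorial factor by enlarging $h$), and the algebra property follows from the Leibniz rule together with the elementary estimate $\sum_{i\le j}\binom{j}{i}i!(j-i)!\le (j+1)!$, so that a product lands in $\mathcal{D}_{L^{\infty},2h}^{\left\{1\right\}}$ if both factors are in $\mathcal{D}_{L^{\infty},h}^{\left\{1\right\}}$. Combining this with Proposition~\ref{pro}\eqref{pro1} (that $\mathcal{C}_{aa}$ is a translation-invariant Banach subalgebra of $\mathcal{C}_{b}$) and Proposition~\ref{pro}\eqref{pro5} (stability of $\mathcal{C}_{aa}$ under differentiation, applicable here since every derivative of $f\in\mathcal{D}_{L^{\infty}}^{\left\{1\right\}}$ is bounded with bounded derivative, hence uniformly continuous) gives that $\mathcal{B}_{aa}^{\left\{1\right\}}=\mathcal{C}_{aa}\cap\mathcal{D}_{L^{\infty}}^{\left\{1\right\}}$ is a subalgebra stable by translations and derivations. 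For closedness in $\mathcal{D}_{L^{\infty}}^{\left\{1\right\}}$, I would take a sequence $f_{n}\in\mathcal{B}_{aa}^{\left\{1\right\}}$ converging to $f$ in $\mathcal{D}_{L^{\infty}}^{\left\{1\right\}}$; convergence in the inductive-limit topology forces the $f_{n}$ into a common Banach step $\mathcal{D}_{L^{\infty},h}^{\left\{1\right\}}$ with $\left\Vert f_{n}-f\right\Vert_{L^{\infty},h}\to 0$, so in particular $f_{n}^{(j)}\to f^{(j)}$ uniformly for each $j$; since $\mathcal{C}_{aa}$ is closed in $\mathcal{C}_{b}$ for $\left\Vert\cdot\right\Vert_{\infty}$, each $f^{(j)}\in\mathcal{C}_{aa}$, and the uniform bound $\left\Vert f^{(j)}\right\Vert_{\infty}\le Ch^{j}j!$ passes to the limit, so $f\in\mathcal{B}_{aa}^{\left\{1\right\}}$.

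For item \ref{Baa12}, let $f\in\mathcal{B}_{aa}^{\left\{1\right\}}$ and $g\in L^{1}$. On the one hand $f\in\mathcal{C}_{aa}$ and Proposition~\ref{pro}\eqref{pro4} give $f\ast g\in\mathcal{C}_{aa}$. On the other hand, differentiation passes onto the $f$ factor, $(f\ast g)^{(j)}=f^{(j)}\ast g$, and Young's inequality yields $\left\Vert (f\ast g)^{(j)}\right\Vert_{\infty}\le\left\Vert f^{(j)}\right\Vert_{\infty}\left\Vert g\right\Vert_{1}\le C\left\Vert g\right\Vert_{1}h^{j}j!$, so $f\ast g\in\mathcal{D}_{L^{\infty}}^{\left\{1\right\}}$; one should also note $f\ast g\in C^{\infty}$ since $f\in C^{\infty}$ with all derivatives bounded. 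Intersecting, $f\ast g\in\mathcal{B}_{aa}^{\left\{1\right\}}$. For item $3$, if $F$ is a primitive of $f\in\mathcal{B}_{aa}^{\left\{1\right\}}$ then $F\in C^{\infty}$ with $F^{(j)}=f^{(j-1)}$ for $j\ge 1$; the only missing ingredients for $F\in\mathcal{B}_{aa}^{\left\{1\right\}}$ concern $j=0$, namely $F\in\mathcal{C}_{aa}$ and $\left\Vert F\right\Vert_{\infty}<\infty$. Boundedness of $F$ is exactly the stated hypothesis, and then Proposition~\ref{pro}\eqref{pro6} gives $F\in\mathcal{C}_{aa}$; together with the estimates $\left\Vert F^{(j)}\right\Vert_{\infty}=\left\Vert f^{(j-1)}\right\Vert_{\infty}\le Ch^{j-1}(j-1)!\le C h^{j}j!$ for $j\ge 1$, this puts $F$ in $\mathcal{D}_{L^{\infty}}^{\left\{1\right\}}$ and hence in $\mathcal{B}_{aa}^{\left\{1\right\}}$. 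The converse is trivial since membership in $\mathcal{B}_{aa}^{\left\{1\right\}}\subset\mathcal{D}_{L^{\infty}}^{\left\{1\right\}}\subset\mathcal{C}_{b}$ forces boundedness. The only genuinely delicate point in the whole argument is the closedness in item $1$: one must be careful to use that convergence in the $(DF)$-space inductive limit $\mathcal{D}_{L^{\infty}}^{\left\{1\right\}}=\cup_{h>0}\mathcal{D}_{L^{\infty},h}^{\left\{1\right\}}$ localizes to a single Banach step, so that it implies uniform convergence of all derivatives; everything else reduces to quoting Propositions~\ref{pro} and \ref{baa1} and routine Leibniz/Young estimates.
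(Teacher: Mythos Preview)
Your proof is correct and follows essentially the same route as the paper: both verify the algebra, translation, derivation, and closedness properties via the Leibniz rule, Proposition~\ref{pro}, and the fact that convergence in $\mathcal{D}_{L^{\infty}}^{\{1\}}$ localizes to a fixed step and hence gives uniform convergence of all derivatives; items~\ref{Baa12} and~3 are likewise handled identically via Young's inequality and Proposition~\ref{pro}\eqref{pro4},\eqref{pro6}. One small slip: in item~3 the inequality $Ch^{j-1}(j-1)!\le Ch^{j}j!$ need not hold when $h<1$; the correct bound (as the paper writes) is $\left\Vert F^{(j)}\right\Vert_{\infty}\le \frac{C}{h}\,h^{j}j!$, which still places $F$ in $\mathcal{D}_{L^{\infty}}^{\{1\}}$.
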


\begin{proof}
$\mathit{1.}$ It is easy to see that $\mathcal{B}_{aa}^{\left\{ 1\right\} }$
is stable by translations and derivation. Let $f_{1}$ and $f_{2}$ belong to $%
\mathcal{B}_{aa}^{\left\{ 1\right\} }$, then we have $f_{1}f_{2}\in
C^{\infty }$, and $\left( f_{1}f_{2}\right) ^{(l)}\in \mathcal{C}_{aa}$, $%
\forall l\in
\mathbb{Z}
_{+}$ by Proposition $\ref{pro}$-$\left( \ref{pro1}\right) .$ Furthermore, $%
\exists h_{i}>0$, $\exists C_{i}>0$, $\forall l\in
\mathbb{Z}
_{+}$,%
\begin{eqnarray*}
\left\Vert (f_{1}f_{2})^{(l)}\right\Vert _{\infty } &\leq &\sum_{k+j=l}\frac{%
l!}{k!j!}\left\Vert f_{1}^{(k)}\right\Vert _{\infty }\left\Vert
f_{2}^{(j)}\right\Vert _{\infty } \\
&\leq &C_{1}C_{2}\sum_{k+j=l}\frac{l!}{k!j!}h_{1}^{k}h_{2}^{j}k!j! \\
&\leq &Ch^{l}l!,
\end{eqnarray*}%
where $C=2^{l}C_{1}C_{2}>0$ and $h=h_{1}+h_{2}>0.$ It remains to show that $%
\mathcal{B}_{aa}^{\left\{ 1\right\} }$ is closed in $\mathcal{D}_{L^{\infty
}}^{\left\{ 1\right\} }$. Let $\left( \varphi _{n}\right) _{n\in
\mathbb{N}
}$ be a sequence of $\mathcal{B}_{aa}^{\left\{ 1\right\} }$ that converges
to $\varphi $ $\in $ $\mathcal{D}_{L^{\infty }}^{\left\{ 1\right\} },$ i.e. $%
\exists h>0$, $\exists C>0$, $\forall n\in
\mathbb{N}
,\forall j\in
\mathbb{Z}
_{+}$, $\left\Vert \varphi _{n}^{(j)}-\varphi ^{(j)}\right\Vert _{\infty
}\leq Ch^{j}j!.$ Due to the uniform convergence, and by Proposition $\ref%
{pro}$-$\left( \ref{pro1}\right) ,$ $\varphi ^{(j)}\in \mathcal{C}%
_{aa},\forall j\in
\mathbb{Z}
_{+}$. Consequently, we have $\varphi \in \mathcal{B}_{aa}^{\left\{
1\right\} }.$

$\mathit{2.}$ If \ $f\in \mathcal{B}_{aa}^{\left\{ 1\right\} }$ and $g\in
L^{1}$, then $f\ast g\in C^{\infty }$, and $\forall j\in
\mathbb{Z}
_{+}$, $\left( f\ast g\right) ^{(j)}=f^{(j)}\ast g\in \mathcal{C}_{aa}$ by
Proposition $\ref{pro}$-$\left( \ref{pro4}\right) $. On the other hand, $%
\exists h>0$, $\exists C>0,$ $\forall j\in
\mathbb{Z}
_{+}$, $\forall x\in
\mathbb{R}
,$%
\begin{equation*}
\left\vert \left( f\ast g\right) ^{(j)}(x)\right\vert \leq \left\Vert
f^{(j)}\right\Vert _{\infty }\int\limits_{%
\mathbb{R}
}\left\vert g(x-y)\right\vert dy\leq Ch^{j}j!\left\Vert g\right\Vert _{1},
\end{equation*}%
hence $f\ast g\in \mathcal{B}_{aa}^{\left\{ 1\right\} }.$

$\mathit{3}.$ If $F\in \mathcal{B}_{aa}^{\left\{ 1\right\} }$ is a primitive
of $f\in \mathcal{B}_{aa}^{\left\{ 1\right\} }$, so $F$ is bounded.
Conversely, let $F$ \ be a bounded primitive of $f\in \mathcal{B}%
_{aa}^{\left\{ 1\right\} },$ by Proposition $\ref{pro}$-$(\ref{pro6}),$ we
have $F\in \mathcal{C}_{aa}.$ Moreover, since $f\in \mathcal{B}%
_{aa}^{\left\{ 1\right\} },$ then $\exists h>0,$ $\exists C>0,$ $\forall $ $%
j\in
\mathbb{N}
,$
\begin{equation*}
\left\Vert F^{(j)}\right\Vert _{\infty }=\left\Vert f^{(j-1)}\right\Vert
_{\infty }\leq Ch^{j-1}(j-1)!\leq \frac{C}{h}h^{j}j!,
\end{equation*}%
so $F\in \mathcal{D}_{L^{\infty }}^{\left\{ 1\right\} }$ and due to the
assertion $2$ of Proposition \ref{baa1}$\mathit{,}$ we get $F\in \mathcal{B}%
_{aa}^{\left\{ 1\right\} }.$
\end{proof}

\section{Spaces of hyperfunctions}

In this section we recall spaces of hyperfunctions and prove some
of their properties.

Let $\mathcal{F}_{h,k}$ denotes the space of all infinitely derivable
functions $\varphi $ such that
\begin{equation*}
\left\Vert \varphi \right\Vert _{h,k}:=\sup_{\substack{ x\in
\mathbb{R}
\\ j\in
\mathbb{Z}
_{+}}}\frac{\left\vert e^{k\left\vert x\right\vert }\varphi
^{(j)}(x)\right\vert }{h^{j}j!}<\infty ,
\end{equation*}%
then $\left( \mathcal{F}_{h,k},\left\Vert .\right\Vert _{h,k}\right) ,h,k>0,$
is a Banach space. The space $\mathcal{F}:=\cup _{h,k>0}\mathcal{F}_{h,k}$
endowed with the topology of inductive limit of the Banach spaces $\left(
\mathcal{F}_{h,k}\right) _{h,k}$ is a $\left( DF\right) $ space, for more
details see \cite{Perio, Gelfand, Pili2}.

\begin{remark}
\label{RMK}

\begin{enumerate}
\item The space $\mathcal{F}$\ is in fact the space $S_{1}^{1}$ of \cite%
{Gelfand}.

\item \label{rmk}We have a continuous and dense embedding of $\mathcal{F}$
into $\mathcal{D}_{L^{1}}^{\left\{ 1\right\} }.$
\end{enumerate}
\end{remark}

Let $\mu >0,$ a differential operator of infinite order
\begin{equation*}
P(D_{x})=\sum\limits_{j=0}^{+\infty }a_{j}\left( \frac{d}{idx}\right) ^{j}
\end{equation*}%
is called a $\left\{ j!^{\mu }\right\} -$ultradifferential operator if for
every $L>0$ there exists $C>0$ such that
\begin{equation}
\left\vert a_{j}\right\vert \leq C\frac{L^{j}}{j!^{\mu }}\text{, \ \ }%
\forall j\in
\mathbb{Z}
_{+}  \label{ultra}
\end{equation}

\begin{proposition}
\label{P(D)ul}Let $P(D_{x})$ be a $\left\{ j!^{2}\right\} -$%
ultradifferential operator, then the operators $P(D_{x}^{2})$ and $P(D_{x})$
are continuous linear mappings from $\mathcal{F}$ into $\mathcal{F}$ and
from $\mathcal{D}_{L^{1}}^{\left\{ 1\right\} }$ into $\mathcal{D}%
_{L^{1}}^{\left\{ 1\right\} }.$
\end{proposition}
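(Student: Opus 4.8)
The plan is to reduce everything to two elementary norm estimates, using that $\mathcal{F}$ and $\mathcal{D}_{L^{1}}^{\{1\}}$ are countable inductive limits of the Banach spaces $\mathcal{F}_{h,k}$ and $\mathcal{D}_{L^{1},h}^{\{1\}}$. For a linear operator $T$ defined on such a space it suffices to show that for every pair $h,k>0$ there are $h',k'>0$ and $C'>0$ with $\left\Vert T\varphi\right\Vert_{h',k'}\le C'\left\Vert\varphi\right\Vert_{h,k}$ for all $\varphi$ in the corresponding step; continuity on the inductive limit then follows. I would take $h'=2h$ and $k'=k$ in every case, so that $P(D_x)$ and $P(D_x^2)$ never disturb the exponential weight.

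\emph{The operator $P(D_x)$.} Writing $P(D_x)\varphi=\sum_j a_j\bigl(\tfrac1i\tfrac{d}{dx}\bigr)^j\varphi$ and differentiating the series termwise $m$ times, I would use $|e^{k|x|}\varphi^{(j+m)}(x)|\le\left\Vert\varphi\right\Vert_{h,k}h^{j+m}(j+m)!$ together with the combinatorial bound $(j+m)!\le 2^{j+m}j!\,m!$ to get
\[
|e^{k|x|}(P(D_x)\varphi)^{(m)}(x)|\le\left\Vert\varphi\right\Vert_{h,k}\,(2h)^m m!\sum_{j\ge 0}|a_j|\,(2h)^j j!.
\]
By $(\ref{ultra})$ applied with any fixed $L>0$ one has $\sum_j|a_j|(2h)^j j!\le C\sum_j(2hL)^j/j!=Ce^{2hL}<\infty$, so dividing by $(2h)^m m!$ gives $\left\Vert P(D_x)\varphi\right\Vert_{2h,k}\le Ce^{2hL}\left\Vert\varphi\right\Vert_{h,k}$. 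The same geometric bound shows the partial sums of the defining series form a Cauchy sequence in the Banach space $\mathcal{F}_{2h,k}$; hence $P(D_x)\varphi$ is a well-defined element of $\mathcal{F}$ and termwise differentiation was legitimate.

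\emph{The operator $P(D_x^2)$.} Here $P(D_x^2)\varphi=\sum_j a_j(-1)^j\varphi^{(2j)}$, so the $m$-th derivative involves $\varphi^{(2j+m)}$. Using $(2j+m)!\le 2^{2j+m}(2j)!\,m!$ and $(2j)!=\binom{2j}{j}(j!)^2\le 4^j(j!)^2$, hence $(2j+m)!\le 2^m 16^j(j!)^2 m!$, I would obtain
\[
|e^{k|x|}(P(D_x^2)\varphi)^{(m)}(x)|\le\left\Vert\varphi\right\Vert_{h,k}\,(2h)^m m!\sum_{j\ge 0}|a_j|\,(16h^2)^j(j!)^2.
\]
This is exactly where the hypothesis that $P$ is a $\{j!^2\}$-ultradifferential operator is needed: $(\ref{ultra})$ gives $|a_j|(j!)^2\le C L^j$, and choosing $L$ with $16Lh^2<1$ (possible since $(\ref{ultra})$ holds for \emph{every} $L>0$) turns the series into a convergent geometric one. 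Thus $\left\Vert P(D_x^2)\varphi\right\Vert_{2h,k}\le C'\left\Vert\varphi\right\Vert_{h,k}$, and again the partial sums converge in $\mathcal{F}_{2h,k}$. A merely $\{j!\}$-bound would leave a factor $j!$ in the series and fail, which is why the square is built into the definition.

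\emph{The case of $\mathcal{D}_{L^{1}}^{\{1\}}$.} No new idea is required: replacing $\sup_x|e^{k|x|}\cdot|$ by $\left\Vert\cdot\right\Vert_{1}$ and applying the triangle inequality over the series produces the identical two estimates, since passing to $L^{1}$-norms does not interact with the differentiations in the bookkeeping. Hence $P(D_x)$ and $P(D_x^2)$ map $\mathcal{D}_{L^{1},h}^{\{1\}}$ boundedly into $\mathcal{D}_{L^{1},2h}^{\{1\}}$, whence continuity on the inductive limit. Beyond the routine factorial inequalities, the only point I expect to need genuine care is verifying that the operator series converges in the target Banach step space so that term-by-term differentiation is justified — and this is delivered for free by the Cauchy estimates above.
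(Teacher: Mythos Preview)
Your argument is correct and essentially identical to the paper's: both bound $\left\Vert P(D_x^2)\varphi\right\Vert_{2h,k}$ via the factorial inequality $(2j+m)!\le 2^{4j+m}(j!)^2 m!$, then choose $L$ small enough to make $\sum_j(16Lh^2)^j$ geometric, and dispatch $P(D_x)$ and the $\mathcal{D}_{L^1}^{\{1\}}$ case by the same computation. The only cosmetic differences are that you derive the factorial bound in two steps and add the remark about Cauchy partial sums justifying termwise differentiation, which the paper leaves implicit.
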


\begin{proof}
The linearity of $P(D_{x}^{2})$ is obvious. Let $\varphi \in $ $\mathcal{F}$%
, then there exist $h,k>0$ such that $\forall x\in
\mathbb{R}
,\left\vert e^{k\left\vert x\right\vert }\varphi ^{(j)}(x)\right\vert \leq
h^{j}j!\left\Vert \varphi \right\Vert _{h,k}$ for all $j\in
\mathbb{Z}
_{+}$. Using the inequalities $\left( \ref{ultra}\right) $, then
\begin{equation*}
\left\vert e^{k\left\vert x\right\vert }(P(D_{x}^{2})\varphi
)^{(j)}(x)\right\vert \leq \sum\limits_{\substack{ i\in \mathbb{Z}_{+}}}%
\dfrac{CL^{i}}{i!^{2}}h^{2i+j}(2i+j)!\left\Vert \varphi \right\Vert _{h,k},
\end{equation*}%
and since $(2i+j)!\leq 2^{4i+j}i!^{2}j!$, we obtain%
\begin{equation*}
\lvert e^{k\lvert x\rvert }(P(D_{x}^{2})\varphi )^{(j)}(x)\rvert \leq
C(2h)^{j}j!\left\Vert \varphi \right\Vert _{h,k}\sum\limits_{\substack{ i\in
\mathbb{Z}_{+}}}(2^{4}Lh^{2})^{i},
\end{equation*}%
if $L>0$\ such that $\ Lh^{2}<\dfrac{1}{2^{5}}$, it follows
\begin{equation*}
\left\Vert P(D^{2})\varphi \right\Vert _{2h,k}\leq C\left\Vert \varphi
\right\Vert _{h,k},
\end{equation*}%
which means the continuity of $P(D_{x}^{2})$ from $\mathcal{F}$ into $%
\mathcal{F}$. In the same way we obtain the continuity of $P(D_{x})$ from $%
\mathcal{F}$ into $\mathcal{F}$ and of $P(D_{x}^{2})$ and $P(D_{x})$ from $%
\mathcal{D}_{L^{1}}^{\left\{ 1\right\} }$ into $\mathcal{D}_{L^{1}}^{\left\{
1\right\} }.$
\end{proof}

\begin{remark}
\label{d}

\begin{enumerate}
\item \label{j!}If $P(D_{x})$ is a $\left\{ j!^{\mu }\right\} -$%
ultradifferential operator acting on $\mathcal{F}$ and $\mathcal{D}%
_{L^{1}}^{\left\{ 1\right\} }$, then for every $0<\lambda \leq \mu ,P(D_{x})$
is also a $\left\{ j!^{\lambda }\right\} -$ultradifferential operator acting
on $\mathcal{F}$ and $\mathcal{D}_{L^{1}}^{\left\{ 1\right\} }.$

\item \label{P}If $P(D_{x})$ is only a $\left\{ j!\right\} -$%
ultradifferential operator, then the operator $P(D_{x}^{2})$ does not
necessary operate on $\mathcal{F}$.

\item \label{P2}If $P(D_{x})$ is a $\left\{ j!\right\} -$ultradifferential
operator, then the operator $P(D_{x})$ is continuous linear mapping from $%
\mathcal{F}$ into $\mathcal{F}$ and from $\mathcal{D}_{L^{1}}^{\left\{
1\right\} }$ into $\mathcal{D}_{L^{1}}^{\left\{ 1\right\} }.$
\end{enumerate}
\end{remark}

Recall that the space of $L^{p}$-hyperfunctions, $1<p\leq +\infty ,$ denoted
by $\mathcal{D}_{L^{p},\left\{ 1\right\} }^{\prime },$ is defined as the
topological dual of $\mathcal{D}_{L^{q}}^{\left\{ 1\right\} }$, where $\frac{%
1}{p}+\frac{1}{q}=1,$ see \cite{Perio, Pili2}. The space $\mathcal{\dot{D}}%
_{L^{\infty }\text{ }}^{\left\{ 1\right\} }$denotes the closure in $\mathcal{%
D}_{L^{\infty }}^{\left\{ 1\right\} }$ of the space $\mathcal{F}$, and its
topological dual $\left( \mathcal{\dot{D}}_{L^{\infty }\text{ }}^{\left\{
1\right\} }\right) ^{\prime }$ is denoted by $\mathcal{D}_{L^{1},\left\{
1\right\} }^{\prime }.$

\begin{definition}
The topological dual $\mathcal{F}^{\prime }$ of $\mathcal{F}$ is called the
space of Fourier hyperfunctions. The space $\mathcal{D}_{L^{\infty },\left\{
1\right\} }^{\prime }:=\left( \mathcal{D}_{L^{1}}^{\left\{ 1\right\}
}\right) ^{\prime }$ is the space of bounded hyperfunctions.
\end{definition}

\begin{remark}
\label{D1}In view of Remark $\ref{RMK}$-$\left( \ref{rmk}\right) $, we have
a continuous embedding of $\ \mathcal{D}_{L^{\infty },\left\{ 1\right\}
}^{\prime }$ into $\mathcal{F}^{\prime }$.
\end{remark}

\begin{remark}
\label{DD}

\begin{enumerate}
\item \label{D3}Let $P(D_{x})$ be a $\left\{ j!^{2}\right\} -$%
ultradifferential operator, by Proposition $\ref{P(D)ul}$, it follows the
continuity of linear operators $P(D_{x}^{2})$ and $P(D_{x})$ from $\mathcal{D%
}_{L^{\infty },\left\{ 1\right\} }^{\prime }$into $\mathcal{D}_{L^{\infty
},\left\{ 1\right\} }^{\prime }$, and also from $\mathcal{F}^{\prime }$ into
$\mathcal{F}^{\prime }$.

\item \label{D4}If $P(D_{x})$ is a $\left\{ j!\right\} -$ultradifferential
operator, by Remark $\ref{d}$-$\left( \ref{P2}\right) ,$ it holds that the
continuity of linear operator $P(D_{x})$ from $\mathcal{D}_{L^{\infty
},\left\{ 1\right\} }^{\prime }$into $\mathcal{D}_{L^{\infty },\left\{
1\right\} }^{\prime }$, and also from $\mathcal{F}^{\prime }$ into $\mathcal{%
F}^{\prime }$.
\end{enumerate}
\end{remark}

\section{Almost automorphic hyperfunctions}
In this section we introduce almost automorphic hyperfunctions and give some
of their main properties.

The heat kernel $E$ is defined by%
\begin{equation*}
E(x,t)=\left\{
\begin{array}{l}
\frac{1}{\sqrt{4\pi t}}e^{-\frac{x^{2}}{4t}},t>0 \\
0,t\leq 0%
\end{array}%
\right.
\end{equation*}

We recall some needed results, see \cite{Perio, matzu2}.$\ $

\begin{lemma}
\label{cv}

\begin{enumerate}
\item The function$\ E(.,t)$ $\in \mathcal{F}$, $\forall $ $t>0$.

\item The function $\varphi \left( .\right) \ast E(.,t)$ $\in \mathcal{D}%
_{L^{1}}^{\left\{ 1\right\} }$, $\forall $ $\varphi \in \mathcal{D}%
_{L^{1}}^{\left\{ 1\right\} }$, $\forall $ $t>0.$

\item \label{cv3}We have $\varphi \left( .\right) \ast E(.,t)\longrightarrow
\varphi \left( .\right) $ in $\mathcal{D}_{L^{1}}^{\left\{ 1\right\} }$ as $%
t\rightarrow 0^{+}.$
\end{enumerate}
\end{lemma}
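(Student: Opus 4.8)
The plan is to handle the three assertions in turn; statements (1) and (2) are direct estimates on the heat kernel together with Young's inequality, while the real work is in (3), because of the uniformity in the order of differentiation that is needed there. For (1), I would use that $x\mapsto E(x,t)$ is the restriction to $\mathbb{R}$ of the entire function $z\mapsto (4\pi t)^{-1/2}e^{-z^{2}/4t}$. Fix a radius $r>0$; Cauchy's integral formula over the circle $|z-x|=r$ gives $|E^{(j)}(x,t)|\le (j!/r^{j})\sup_{|z-x|=r}|E(z,t)|$ for every $j\in\mathbb{Z}_{+}$. Writing $z=x+re^{i\theta}$, one checks $\operatorname{Re}(z^{2})\ge\tfrac12 x^{2}-r^{2}$ for $|x|\ge 4r$ and $\operatorname{Re}(z^{2})\ge-2r^{2}$ for all $x$, so $|E(z,t)|\le(4\pi t)^{-1/2}e^{-x^{2}/8t+r^{2}/4t}$ on the large part and $|E(z,t)|\le(4\pi t)^{-1/2}e^{r^{2}/2t}$ elsewhere; since $e^{k|x|}e^{-x^{2}/8t}$ is bounded, for any $k>0$ there is $C=C(t,r,k)$ with $e^{k|x|}|E(z,t)|\le C$ whenever $|z-x|=r$. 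Hence $e^{k|x|}|E^{(j)}(x,t)|\le C\,j!/r^{j}$ for all $x$ and $j$, i.e. $\|E(\cdot,t)\|_{1/r,\,k}<\infty$, so $E(\cdot,t)\in\mathcal{F}_{1/r,k}\subset\mathcal{F}$.

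For (2), note $E(\cdot,t)\in\mathcal{F}\subset L^{1}$ with $\|E(\cdot,t)\|_{1}=1$, so for $\varphi\in\mathcal{D}_{L^{1}}^{\{1\}}$ the convolution $\varphi\ast E(\cdot,t)$ is $C^{\infty}$ with $\bigl(\varphi\ast E(\cdot,t)\bigr)^{(j)}=\varphi^{(j)}\ast E(\cdot,t)$ for every $j$ (differentiating onto $E$ and integrating by parts, using the decay of $E$). Choosing $h>0$ and $C_{\varphi}>0$ with $\|\varphi^{(j)}\|_{1}\le C_{\varphi}h^{j}j!$ for all $j$, Young's inequality gives $\|\varphi^{(j)}\ast E(\cdot,t)\|_{1}\le\|\varphi^{(j)}\|_{1}\,\|E(\cdot,t)\|_{1}\le C_{\varphi}h^{j}j!$. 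Thus $\varphi\ast E(\cdot,t)\in\mathcal{D}_{L^{1},h}^{\{1\}}\subset\mathcal{D}_{L^{1}}^{\{1\}}$, and crucially this bound is independent of $t>0$; that uniformity is what drives (3).

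For (3), it suffices to exhibit a single step $\mathcal{D}_{L^{1},h'}^{\{1\}}$ of the inductive limit containing $\varphi\ast E(\cdot,t)$ for small $t$ in which $\varphi\ast E(\cdot,t)\to\varphi$, since the inclusion of that Banach space into $\mathcal{D}_{L^{1}}^{\{1\}}$ is continuous. Take $h'>h$ with $h$ as in (2). Given $\varepsilon>0$, split $\sup_{j}\|\varphi^{(j)}\ast E(\cdot,t)-\varphi^{(j)}\|_{1}/\bigl((h')^{j}j!\bigr)$ at an index $N$: for $j>N$ the uniform bound from (2) yields $\|\varphi^{(j)}\ast E(\cdot,t)-\varphi^{(j)}\|_{1}/\bigl((h')^{j}j!\bigr)\le 2C_{\varphi}(h/h')^{j}$, which is $<\varepsilon$ for all $t>0$ once $N$ is large; while for the finitely many $j\le N$, each $\varphi^{(j)}\in L^{1}$ and $\{E(\cdot,t)\}_{t>0}$ is an approximate identity (nonnegative, total mass $1$, with $\int_{|x|>\delta}E(x,t)\,dx\to0$ as $t\to0^{+}$), so $\|\varphi^{(j)}\ast E(\cdot,t)-\varphi^{(j)}\|_{1}\to0$ and there is $\delta>0$ making all these terms $<\varepsilon$ for $0<t<\delta$. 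Hence $\|\varphi\ast E(\cdot,t)-\varphi\|_{L^{1},h'}\le 2\varepsilon$ for $0<t<\delta$, which is the asserted convergence. The main obstacle is exactly this uniformity across all derivative orders: the classical $L^{1}$ approximate-identity statement only handles each fixed $j$, and one must combine the Gevrey-type estimate $\|\varphi^{(j)}\|_{1}\le C_{\varphi}h^{j}j!$ with the passage to a strictly larger weight $h'$ to control the high-order tail by a geometric series uniformly in $t$, leaving only a finite-$j$ argument.
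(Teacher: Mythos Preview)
Your argument is correct. Note, however, that the paper does not actually prove this lemma: it is stated with the preface ``We recall some needed results, see \cite{Perio, matzu2}'' and no proof is given. So there is no ``paper's own proof'' to compare against; you have supplied the details that the paper outsources to the literature.

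Your approach is the natural one and is essentially what one finds in those references. For (1), the Cauchy-estimate argument on the entire extension of the Gaussian is standard; for (2), Young's inequality with $\|E(\cdot,t)\|_{1}=1$ gives the bound in the \emph{same} Banach step $\mathcal{D}_{L^{1},h}^{\{1\}}$ as $\varphi$, uniformly in $t$, which is exactly what is needed later. For (3), the key point---controlling the supremum over all $j$---is handled correctly by your high-$j$/low-$j$ split: passing to a strictly larger $h'>h$ turns the tail into a geometric series uniformly in $t$, and the finitely many remaining derivatives are dealt with by the classical $L^{1}$ approximate-identity property of $\{E(\cdot,t)\}_{t>0}$. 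One cosmetic remark: in the low-$j$ part you need the \emph{ratios} $\|\varphi^{(j)}\ast E(\cdot,t)-\varphi^{(j)}\|_{1}/\bigl((h')^{j}j!\bigr)$ to be $<\varepsilon$, not just the numerators, but since there are only finitely many fixed positive denominators this is immediate.
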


\begin{definition}
The Gauss transform of \ $U\in \mathcal{D}_{L^{\infty },\left\{ 1\right\}
}^{\prime }$ is defined and denoted by \
\begin{equation*}
u(.,t)=U(.)\ast E(.,t)=<U_{y},E(.-y,t)>,\text{\ }t>0
\end{equation*}
\end{definition}

\begin{remark}
The Gauss transform of \ $U\in \mathcal{D}_{L^{\infty },\left\{ 1\right\}
}^{\prime }$ is a $C^{\infty }$ function in $%
\mathbb{R}
\times
\mathbb{R}
_{+}.$
\end{remark}

For $h\in
\mathbb{R}
,\tau _{h}$ denotes the translation operator, i.e. $\tau _{h}\varphi
(.):=\varphi (.+h)$ for a function $\varphi .$ For a hyperfuntion $U\in
\mathcal{F}^{\prime },$ $\tau _{h}U$ is defined by $<\tau _{h}U,\varphi
>=<U,\tau _{-h}\varphi >,$ $\varphi \in \mathcal{F}$.

We recall the following important result needed in the proof of the next
theorem.

\begin{lemma}
\cite{matzu2}\label{lem} For any $h>0$ and $\epsilon >0$, there exist
functions $v\in C_{c}^{\infty }(\left[ 0,\epsilon \right] ),$ $w\in
C_{c}^{\infty }(\left[ \frac{\epsilon }{2},\epsilon \right] )$ and an
ultradifferential operator $P\left( \frac{d}{dt}\right)
=\sum\limits_{j=0}^{+\infty }a_{j}\left( \frac{d}{dt}\right) ^{j}$ such that%
\begin{eqnarray*}
\ \left\vert v^{(j)}(t)\right\vert &\leq &Ch^{-j}j!^{2},\forall j\in
\mathbb{Z}
_{+}, \\
\left\vert v(t)\right\vert &\leq &Ce^{-\frac{h}{t}},\text{ }0<t<\infty ,
\end{eqnarray*}%
\begin{equation*}
\forall h_{1}<h,\text{ }\exists C_{1}>0,\forall j\in
\mathbb{Z}
_{+},\left\vert a_{j}\right\vert \leq C_{1}\frac{h_{1}^{j}}{j!^{2}},\text{ \
}
\end{equation*}%
\begin{equation*}
P\left( \frac{d}{dt}\right) v(t)+w(t)=\delta
\end{equation*}
\end{lemma}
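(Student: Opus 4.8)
The plan is to produce $P$, $v$ and $w$ by ``inverting the heat flow at time $t=0$'' by means of the one-sided Laplace transform, and then to localise the error with a Gevrey cut-off. Write $\mathcal L[f](z)=\int_0^{\infty}e^{-zt}f(t)\,dt$ and recall the elementary identity
\begin{equation*}
\mathcal L\!\left[\tfrac{1}{\sqrt{4\pi t}}\,e^{-a/t}\right](z)=\tfrac{1}{2\sqrt z}\,e^{-2\sqrt{az}},\qquad a>0,
\end{equation*}
so that, formally, the symbol $2\sqrt z\,e^{2\sqrt{az}}$ inverts $E(\sqrt{4a},\cdot)$. Since this symbol is not entire, the first step is to replace it by a genuine entire function $P(z)=\sum_j a_j z^{j}$ of order $1/2$: one takes a canonical product
\begin{equation*}
P(z)=\prod_{n\ge 1}\Bigl(1+\tfrac{z}{\mu_n}\Bigr),\qquad \mu_n\uparrow+\infty,\ \ \mu_n\sim\kappa n^{2},
\end{equation*}
with all zeros on the negative real axis, the density $\kappa$ (equivalently the order-$1/2$ type) being fixed in terms of $h$ so that the coefficient estimate below comes out with the prescribed radius. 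For such $P$ one has the order-$1/2$ bound $|P(z)|\le C_{\eta}\exp(\eta|z|^{1/2})$ and $P(\sigma)\to+\infty$ as $\sigma\to+\infty$; a Cauchy estimate on a circle of optimally chosen radius then yields $|a_j|\le C_1 h_1^{j}/j!^{2}$. By Proposition~\ref{P(D)ul} and Remark~\ref{d}, $P(d/dt)$ is then an admissible $\{j!^{2}\}$-ultradifferential operator.

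Second, I build the parametrix. Because every zero of $P$ lies in $\{\operatorname{Re}z<0\}$ and $1/P$ is holomorphic and small for $\operatorname{Re}z$ large, set
\begin{equation*}
v_1(t)=\frac{1}{2\pi i}\int_{(c)}\frac{e^{zt}}{P(z)}\,dz\qquad(c>0).
\end{equation*}
Pushing the contour to $+\infty$ for $t<0$ gives $v_1\equiv0$ there, while for $t>0$ pushing it across the poles produces the absolutely convergent series $v_1(t)=\sum_n e^{-\mu_n t}/P'(-\mu_n)$, a smooth function decaying exponentially as $t\to+\infty$. Since $\mathcal L[P(d/dt)v_1]=P(z)\,\mathcal L[v_1](z)=1$, we get $P(d/dt)v_1=\delta$ in $\mathcal D'(\mathbb R)$. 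The behaviour of $v_1$ near $t=0^{+}$ is of theta-function type: the transformation formula for $\sum_n(\pm)e^{-\mu_n t}$ gives $|v_1(t)|\le Ce^{-h/t}$ and, with the elementary bound $\sup_{t>0}t^{-m}e^{-b/t}\le C_b\,m!\,b^{-m}$, also $|v_1^{(j)}(t)|\le Ch^{-j}j!^{2}$, provided $\kappa$ has been chosen suitably.

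Third, I localise. Fix a Gevrey-$2$ cut-off $\chi$ (these exist since $j!^{2}$ is non-quasi-analytic) with $\chi\equiv1$ on a neighbourhood of $(-\infty,\epsilon/2]$ and $\operatorname{supp}\chi\subset(-\infty,\epsilon]$, so $\operatorname{supp}\chi'\subset[\epsilon/2,\epsilon]$, and put $v:=\chi v_1\in C_c^{\infty}([0,\epsilon])$. On $(-\infty,\epsilon/2]$ one has $v=v_1$, whence the two estimates on $v$ and $v^{(j)}$; on $[\epsilon/2,\epsilon]$, where $v_1$ is real analytic and $\chi$ is Gevrey-$2$, a Leibniz estimate gives the corresponding bounds after matching the Gevrey constant of $\chi$ to $h$. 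Set $w:=\delta-P(d/dt)v$. Using
\begin{equation*}
P(d/dt)(\chi v_1)=\chi\,P(d/dt)v_1+[P(d/dt),\chi]v_1=\chi\,\delta+[P(d/dt),\chi]v_1=\delta+[P(d/dt),\chi]v_1
\end{equation*}
(the last step because $\chi(0)=1$), we obtain $w=-[P(d/dt),\chi]v_1=-\sum_j a_j\sum_{l=1}^{j}\binom{j}{l}\chi^{(l)}v_1^{(j-l)}$. Every term is supported in $\operatorname{supp}\chi'\subset[\epsilon/2,\epsilon]$, and on that compact set the double series converges with all its derivatives since there $v_1$ is analytic, $\chi$ is Gevrey-$2$, and $|a_j|\le C_1 h_1^{j}/j!^{2}$ with $h_1$ taken small; hence $w\in C_c^{\infty}([\epsilon/2,\epsilon])$ and $P(d/dt)v+w=\delta$.

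I expect the hard part to be the construction of $P$ in the first two steps: choosing the zero distribution $\{\mu_n\}$ (equivalently the order-$1/2$ type) so that the Taylor coefficients satisfy $|a_j|\le C_1 h_1^{j}/j!^{2}$ while at the same time $1/P$ decays fast enough on the right half-plane for the inverse transform $v_1$ to obey both $|v_1(t)|\le Ce^{-h/t}$ and $|v_1^{(j)}(t)|\le Ch^{-j}j!^{2}$ — i.e. the simultaneous balancing of the three quantitative requirements. Everything else (the contour deformations, the Leibniz estimates, and the support bookkeeping for $w$) is routine once that balance is in place.
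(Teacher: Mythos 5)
The paper offers no proof of Lemma~\ref{lem}; it is imported wholesale from \cite{matzu2}, so the only meaningful comparison is with the source. Your construction --- a canonical product $P(z)=\prod_n(1+z/\mu_n)$ of order $1/2$ with negative real zeros, the inverse Laplace transform of $1/P$ as a one-sided fundamental solution, theta-type estimates for the flatness at $t=0^+$, and a Gevrey-$2$ cutoff producing $w$ as a commutator --- is indeed the construction used there, and the soft steps (vanishing of $v_1$ for $t<0$, the residue series, the support bookkeeping, and the $C^\infty$ convergence of the series defining $w$, where the $1/(j!)^2$ decay of $a_j$ against the factorial Cauchy estimates of the analytic function $v_1$ on $[\epsilon/2,\epsilon]$ does the work) are all fine. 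The opening heat-kernel identity is pure motivation and plays no role in the argument.

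The genuine gap is exactly the point you defer as ``the hard part'': the simultaneous balancing of the three quantitative requirements is not delicate but impossible \emph{for the statement as transcribed here}, so no choice of $\{\mu_n\}$ can close your argument. Indeed, if $|v^{(j)}(t)|\le Ch^{-j}(j!)^2$ for all $j$ and $|a_j|\le C_1h_1^{j}/(j!)^2$ for some $h_1<h$, then $\sum_j\|a_jv^{(j)}\|_\infty\le CC_1\sum_j(h_1/h)^j<\infty$, so $P(d/dt)v$ is a bounded continuous function and cannot equal $\delta-w$. Your own candidate confirms this from the other side: with $\mu_n=\kappa n^2$ one has $P(z)=\sinh(\pi\sqrt{z/\kappa})/(\pi\sqrt{z/\kappa})$, whose coefficients $a_j=(\pi^2/\kappa)^j/(2j+1)!$ satisfy $|a_j|(j!)^2\asymp(\pi^2/4\kappa)^j$ up to polynomial factors --- so the coefficient constant $h_1=\pi^2/(4\kappa)$ is sharp and cannot be pushed below it --- while the Jacobi transformation gives flatness only at rate $e^{-(\pi^2/(4\kappa)-\delta)/t}$; equivalently, $P(\sigma)\mathcal L[v](\sigma)=1-\mathcal L[w](\sigma)\to1$ forces $P(\sigma)\ge ce^{2\sqrt{h\sigma}}$, hence $h_1\ge h$, the reverse of what the lemma asserts (and a fortiori ``$\forall h_1<h$'' fails). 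The repair is not a cleverer zero distribution but the correct placement of the constants as in \cite{matzu2}: there the coefficient constant of $P$ dominates the Gevrey constant of $v$, so that $\sum_j a_jv^{(j)}$ genuinely fails to converge at $t=0$ and can produce $\delta$. You should state and prove that version; your construction then goes through essentially verbatim, with the remaining work being the contour (or theta-function) estimates that pin down the two constants on the correct sides of each other.
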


To introduce almost automorphic hyperfunctions, we prove the main results of
this section.

\begin{theorem}
\label{thm*}Let $U\in \mathcal{\ D}_{L^{\infty },\left\{ 1\right\} }^{\prime
}$, the following statements are equivalent :

\begin{enumerate}
\item \label{th1}$U\ast \varphi \in \mathcal{\ C}_{aa}$, $\forall \varphi
\in \mathcal{F}$.

\item \label{th2}There exist two functions $f$, $g$ belonging to $\mathcal{C}%
_{aa}$ and a $\left\{ j!^{2}\right\} -$ultradifferential operator $P\left(
D_{x}\right) $ such that $U=P\left( D_{x}^{2}\right) f+g$ in $\mathcal{D}%
_{L^{\infty },\left\{ 1\right\} }^{\prime }$.

\item \label{th3}Every sequence $\left( s_{m}\right) _{m\in
\mathbb{N}
}\subset
\mathbb{R}
$ admits a subsequence $\left( s_{m_{k}}\right) _{k}$ such that%
\begin{equation*}
V:=\lim_{k\rightarrow +\infty }\tau _{s_{m_{k}}}U\text{ \ exists in }%
\mathcal{D}_{L^{\infty },\left\{ 1\right\} }^{\prime }\text{, }
\end{equation*}%
and%
\begin{equation*}
\lim_{k\rightarrow +\infty }\tau _{-s_{m_{k}}}V=U\text{ in \ }\mathcal{D}%
_{L^{\infty },\left\{ 1\right\} }^{\prime }
\end{equation*}

\item Every sequence $\left( s_{m}\right) _{m\in
\mathbb{N}
}\subset
\mathbb{R}
$, admits a subsequence $\left( s_{m_{k}}\right) _{k}$ such that%
\begin{equation}
\lim_{l\rightarrow +\infty }\lim_{k\rightarrow +\infty }\tau
_{-s_{m_{l}}}\tau _{s_{m_{k}}}U\text{ }=U\text{\ \ \ in }\mathcal{D}%
_{L^{\infty },\left\{ 1\right\} }^{\prime }  \label{eq}
\end{equation}

\item The Gauss transform of $U$ is an almost automorphic function in $x\in
\mathbb{R}
$ for every $t>0$.

\item \label{th6}There exists a sequence $\left( \varphi _{n}\right)
_{n}\subset \mathcal{B}_{aa}^{\left\{ 1\right\} }$ converging to $U$ in $%
\mathcal{D}_{L^{\infty },\left\{ 1\right\} }^{\prime }$.
\end{enumerate}
\end{theorem}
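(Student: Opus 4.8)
The plan is to prove the six statements equivalent by establishing a cycle of implications, using the heat-kernel/Gauss-transform machinery of Matsuzawa (Lemma~\ref{cv} and Lemma~\ref{lem}) as the bridge between the ``convolution'' characterization~(\ref{th1}), the ``structure'' characterization~(\ref{th2}), and the ``translation-limit'' characterizations~(\ref{th3})--(5). A convenient route is $(\ref{th2})\Rightarrow(\ref{th6})\Rightarrow(\ref{th1})\Rightarrow(\ref{th3})\Leftrightarrow(4)$, $(\ref{th3})\Rightarrow(5)$, $(5)\Rightarrow(\ref{th1})$, and finally $(\ref{th1})\text{ or }(5)\Rightarrow(\ref{th2})$, the last being the structural heart of the argument.

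First I would handle the ``easy'' arcs. For $(\ref{th2})\Rightarrow(\ref{th6})$: given $U=P(D_x^2)f+g$ with $f,g\in\mathcal{C}_{aa}$, mollify by the Gauss kernel, setting $\varphi_n = U\ast E(\cdot,t_n)$ for $t_n\to 0^+$; using Lemma~\ref{cv}-(\ref{cv3}) (applied in the dual, so that $\varphi_n\to U$ in $\mathcal{D}_{L^\infty,\{1\}}'$), Proposition~\ref{pro}-(\ref{pro4}) (convolution of an almost automorphic function with an $L^1$ function stays in $\mathcal{C}_{aa}$), and the fact that $P(D_x^2)$ commutes with convolution, one checks $\varphi_n\in\mathcal{B}_{aa}^{\{1\}}$ by verifying the Gevrey-type bounds on its derivatives from the $\{j!^2\}$-ultradifferential estimates~(\ref{ultra}) together with derivative bounds on $E(\cdot,t_n)$. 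For $(\ref{th6})\Rightarrow(\ref{th1})$: if $\varphi_n\to U$ in $\mathcal{D}_{L^\infty,\{1\}}'$ with $\varphi_n\in\mathcal{B}_{aa}^{\{1\}}$, then for fixed $\psi\in\mathcal{F}$ the convolutions $\varphi_n\ast\check\psi\to U\ast\check\psi$ uniformly on $\mathbb{R}$ (pair against translates of $\psi$, which form a bounded set in $\mathcal{D}_{L^1}^{\{1\}}$), and each $\varphi_n\ast\check\psi\in\mathcal{C}_{aa}$ by Proposition~\ref{Baa1}-(\ref{Baa12}); since $\mathcal{C}_{aa}$ is closed in $\mathcal{C}_b$ (Proposition~\ref{pro}-(\ref{pro1})), the uniform limit $U\ast\psi$ lies in $\mathcal{C}_{aa}$. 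For $(\ref{th1})\Rightarrow(\ref{th3})$: fix a sequence $(s_m)$; apply the diagonal/subsequence extraction exactly as in the proof of Proposition~\ref{baa1} (the Bochner double-limit technique, citing \cite{BZR2}) to the countable family $\{U\ast\varphi : \varphi \in \text{a countable dense subset of }\mathcal{F}\}$, so that $\lim_k (\tau_{s_{m_k}}U)\ast\varphi$ exists for all such $\varphi$; boundedness of $U$ in $\mathcal{D}_{L^\infty,\{1\}}'$ lets one pass to all $\varphi\in\mathcal{F}$ and identify the limit as a hyperfunction $V$, and the symmetric half gives $\tau_{-s_{m_k}}V\to U$. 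The equivalence $(\ref{th3})\Leftrightarrow(4)$ is the hyperfunction analogue of Remark~\ref{limlim} and is proved by the same formal manipulation. The implication $(\ref{th3})\Rightarrow(5)$ is immediate: evaluating the weak limits against $E(\cdot-x,t)\in\mathcal{F}$ shows the Gauss transform $u(x,t)=\langle U,E(\cdot-x,t)\rangle$ inherits the defining property~(\ref{caa11})--(\ref{caa2}) of $\mathcal{C}_{aa}$ in the variable $x$. And $(5)\Rightarrow(\ref{th1})$ follows by writing $U\ast\varphi = \lim_{t\to 0^+} (u(\cdot,t)\ast\varphi)$ (Lemma~\ref{cv}), each term being in $\mathcal{C}_{aa}$ by Proposition~\ref{pro}-(\ref{pro4}), and using closedness again.

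The main obstacle is the structural implication producing~(\ref{th2}) — say $(\ref{th1})\Rightarrow(\ref{th2})$ (or $(5)\Rightarrow(\ref{th2})$). Here is where Lemma~\ref{lem} is essential: for suitable $h,\epsilon$ one obtains $v,w$ and a $\{j!^2\}$-ultradifferential operator $P$ with $P(d/dt)v(t)+w(t)=\delta(t)$, and one exploits that the Gauss transform $u(x,t)$ of $U$ satisfies the heat equation $\partial_t u = \partial_x^2 u$, so that $P(\partial_t) = P(\partial_x^2)$ when applied to $u$. Convolving the relation $P(d/dt)v+w=\delta$ in $t$ against $u(x,\cdot)$ and integrating, one recovers $U$ (as $t\to 0^+$) in the form
\begin{equation*}
U = P(D_x^2)\!\left(\int_0^\infty v(t)\,u(\cdot,t)\,dt\right) + \int_0^\infty w(t)\,u(\cdot,t)\,dt =: P(D_x^2)f + g .
\end{equation*}
It remains to show $f,g\in\mathcal{C}_{aa}$: since each $u(\cdot,t)\in\mathcal{C}_{aa}$ by~(5) and the bounds $|v(t)|\le Ce^{-h/t}$, $w$ compactly supported in $t$ guarantee the integrals converge absolutely in $\mathcal{C}_b$-norm (using $\|u(\cdot,t)\|_\infty\le\|U\|\cdot\|E(\cdot,t)\|$ which is integrable against $v,w$), $f$ and $g$ are $\mathcal{C}_b$-limits of Riemann sums of elements of $\mathcal{C}_{aa}$, hence lie in $\mathcal{C}_{aa}$ by its closedness and translation-invariance (Proposition~\ref{pro}-(\ref{pro1})). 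One must be careful that $P(D_x^2)$ genuinely acts on $\mathcal{D}_{L^\infty,\{1\}}'$, which is exactly Remark~\ref{DD}-(\ref{D3}), and that the exchange of $P(D_x^2)$ with the $t$-integral is justified by the uniform ultradifferential estimates on the $a_j$ for every $h_1<h$. This closes the cycle, and all six statements are equivalent.

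Finally, a remark on bookkeeping: rather than literally a single cycle, the cleanest write-up proves $(\ref{th2})\Rightarrow(\ref{th6})\Rightarrow(\ref{th1})\Rightarrow(5)\Rightarrow(\ref{th2})$ as the core loop, then inserts $(\ref{th1})\Rightarrow(\ref{th3})\Rightarrow(4)\Rightarrow(\ref{th1})$ and $(\ref{th3})\Rightarrow(5)$ as a side loop sharing the vertex~(\ref{th1}); this minimizes repetition of the subsequence-extraction argument, which is invoked only once in full detail (for $(\ref{th1})\Rightarrow(\ref{th3})$) and otherwise quoted.
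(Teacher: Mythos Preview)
Your scheme of implications is correct and each arrow you describe can be completed, but the organisation and several key steps differ from the paper's proof, which runs the single linear cycle $1\Rightarrow 2\Rightarrow 3\Rightarrow 4\Rightarrow 5\Rightarrow 6\Rightarrow 1$.

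The most substantive divergence is in the structural implication producing~(\ref{th2}). The paper does not define $f,g$ directly as $t$-integrals of the Gauss transform. Instead it sets $V(x,t)=\int_0^\infty v(s)E(x,t+s)\,ds$ and $W$ similarly, obtains $P(D_x^2)(U\ast V)+U\ast W=U\ast E$ for $t>0$, and then invokes Widder's representation theorem for bounded solutions of the heat equation (Theorem~1, Ch.~VII of \cite{widder}) to write $U\ast V(\cdot,t)=f\ast E(\cdot,t)$ and $U\ast W(\cdot,t)=g\ast E(\cdot,t)$ with $f,g\in L^\infty$; almost automorphy of $f,g$ then comes from $f\ast E(\cdot,t)=U\ast V(\cdot,t)\in\mathcal{C}_{aa}$ by hypothesis~(\ref{th1}) together with uniform convergence $f\ast E(\cdot,t)\to f$. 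Your formula $f=\int_0^\infty v(t)\,u(\cdot,t)\,dt$ is morally the same object (it is $U\ast V(\cdot,0)$), but your justification ``convolve $P(d/dt)v+w=\delta$ against $u(x,\cdot)$ and recover $U$ as $t\to 0^+$'' hides the infinite-order integration by parts and the boundary-value passage that Widder's theorem handles for the paper in one stroke; to make your version rigorous you would still need to check $V(\cdot,0)\in\mathcal{D}_{L^1}^{\{1\}}$ and that the identity survives the limit. A second difference: the paper obtains~(\ref{th3}) from~(\ref{th2}) by a two-line dominated-convergence argument on the explicit decomposition $U=P(D_x^2)f+g$, which is considerably lighter than your density/diagonal argument for $(\ref{th1})\Rightarrow(\ref{th3})$; and the paper reaches~(\ref{th6}) from~(5) using only the derivative estimates on $E(\cdot,t)$ and the standing hypothesis $U\in\mathcal{D}_{L^\infty,\{1\}}'$, rather than from~(\ref{th2}) as you propose, so it never needs to argue that $P(D_x^2)$ preserves membership in $\mathcal{B}_{aa}^{\{1\}}$ for the mollified pieces. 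In short, your web of implications closes, but the paper's linear cycle, anchored on Widder's theorem, is both shorter and leaves fewer analytic details to the reader.
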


\begin{proof}
$\mathit{1.\Rightarrow 2.}$ \ Let $v,w$ and $P\left( D_{t}\right) $ be as in
Lemma$\ \ref{lem}$, the functions \ $V$ and $W$ defined by
\begin{equation*}
V(x,t):=\int_{0}^{+\infty }v(s)E(x,t+s)ds\text{\ \ \ \ }W(x,t):=\int_{0}^{+%
\infty }w(s)E(x,t+s)ds,
\end{equation*}%
belong to $C^{\infty }$ in $%
\mathbb{R}
\times
\mathbb{R}
^{+},$ and for every fixed $t>0$ they belong to $\mathcal{F\subset }$ $%
\mathcal{D}_{L^{1}}^{\left\{ 1\right\} }$ as functions of $x$, furthermore
we have
\begin{equation*}
P\left( -\frac{d}{dt}\right) V(x,t)=E(x,t)-W(x,t),
\end{equation*}%
see \cite{Fap}. As $P\left( -\frac{d}{dt}\right) U\in \mathcal{D}%
_{L^{\infty },\left\{ 1\right\} }^{\prime }$ and $V\in \mathcal{D}%
_{L^{1}}^{\left\{ 1\right\} }$, then in view of Corollary 5.10 of \cite%
{Pili2}, we have
\begin{equation*}
\left( P\left( -\frac{d}{dt}\right) U(.)\right) \ast V(x,t)=P\left( -\frac{d%
}{dt}\right) \left( U(.)\ast V(x,t)\right) ,
\end{equation*}%
so
\begin{equation}
P\left( -\frac{d}{dt}\right) \left( U(.)\ast V(x,t)\right) +U(.)\ast
W(x,t)=U(.)\ast E(x,t)  \label{num1}
\end{equation}%
As the heat kernel $E$ is a classical solution of the heat equation in $%
\mathbb{R}
\times
\mathbb{R}
^{+},$ it follows that $V$ is also a classical solution of the heat equation
in $%
\mathbb{R}
\times
\mathbb{R}
^{+}$, so we have
\begin{eqnarray*}
P\left( -\frac{d}{dt}\right) \left( U(.)\ast V(x,t)\right)
&=&\sum\limits_{j=0}^{+\infty }a_{j}\left( U(.)\ast \left( -\frac{d}{dt}%
\right) ^{j}V(x,t)\right) \\
&=&\sum\limits_{j=0}^{+\infty }a_{j}\left( U(.)\ast \left( -\frac{d^{2}}{%
dx^{2}}\right) ^{j}V(x,t)\right) \\
&=&P(D_{x}^{2})\left( U(.)\ast V(x,t)\right) \text{,}
\end{eqnarray*}%
consequently,
\begin{equation}
P\left( D_{x}^{2}\right) \left( U(.)\ast V(x,t)\right) +U(.)\ast
W(x,t)=U(.)\ast E(x,t)  \label{D}
\end{equation}%
As $W$ is also a classical solution of the heat equation in $%
\mathbb{R}
\times
\mathbb{R}
^{+}$, then $U\ast V$ and $U\ast W$ are classical solutions of the heat
equation in $%
\mathbb{R}
\times
\mathbb{R}
^{+}$. Moreover, by hypothesis we have that for every fixed $t>0$ the
functions $U(.)\ast V(.,t)$, $U(.)\ast W(.,t)$ belong to $\mathcal{C}_{aa}$,
then they are continuous and bounded functions on $%
\mathbb{R}
$, it follows by Theorem $1$ Chapter $VII$ of \cite{widder}, there exist
functions $f$, $g$ $\in $ $L^{\infty }\left(
\mathbb{R}
\right) $ such that
\begin{equation*}
U(.)\ast V(x,t)=f\ast E(x,t)\text{ \ and }U(.)\ast W(x,t)=g\ast E(x,t)
\end{equation*}%
Due to $\left( \ref{D}\right) $, we obtain%
\begin{equation}
P\left( D_{x}^{2}\right) \left( f\ast E(x,t)\right) +g\ast E(x,t)=U(.)\ast
E(x,t)  \label{DD1}
\end{equation}%
For every $t>0$, as the functions $f\ast E(.,t)=U(.)\ast V(.,t)$ and $g\ast
E(.,t)=U(.)\ast W(.,t),$ then they belong to $\mathcal{C}_{aa}$ by
hypothesis. We have%
\begin{eqnarray*}
\left\vert f\ast E(x,t)-f(x)\right\vert &=&\left\vert \frac{1}{\sqrt{4\pi t}}%
\int_{%
\mathbb{R}
}e^{\frac{-y^{2}}{4t}}\left( f(x-y)-f(x)\right) dy\right\vert , \\
&\leq &\frac{1}{\sqrt{\pi }}\int_{%
\mathbb{R}
}e^{-r^{2}}\left\vert f(x-r\sqrt{4t})-f(x)\right\vert dr,
\end{eqnarray*}%
and using dominated convergence Theorem, we obtain
\begin{equation*}
\sup_{x\in
\mathbb{R}
}\left\vert f\ast E(x,t)-f(x)\right\vert \underset{t\rightarrow 0^{+}}{%
\rightarrow 0},
\end{equation*}%
i.e. $f\ast E(.,t)$ converge uniformly to $f$ on $%
\mathbb{R}
$ as $t\rightarrow 0^{+}.$ In the same way, we obtain that $g\ast E(.,t)$
converge uniformly to $g$ on $%
\mathbb{R}
$ as $t\rightarrow 0^{+}.$ Due Proposition $\ref{pro}$-$\left( \ref{pro1}%
\right) $ gives $f$, $g\in \mathcal{C}_{aa}$. Consequently, and as the
linear operator $P\left( D_{x}^{2}\right) $ is continuous from $\mathcal{D}%
_{L^{\infty },\left\{ 1\right\} }^{\prime }$into $\mathcal{D}_{L^{\infty
},\left\{ 1\right\} }^{\prime }$, it follows that $P\left( D_{x}^{2}\right)
\left( f\ast E(.,t)\right) $ converge to $P\left( D_{x}^{2}\right) f$ \ in $%
\mathcal{D}_{L^{\infty },\left\{ 1\right\} }^{\prime }$ as $t\rightarrow
0^{+}.$ Due to Lemma $\ref{cv}$-$\left( \ref{cv3}\right) $, we have that for
every $\varphi \in \mathcal{D}_{L^{1}}^{\left\{ 1\right\} },$
\begin{equation*}
\lim_{t\rightarrow 0^{+}}<U(.)\ast E(.,t),\varphi (.)>=\lim_{t\rightarrow
0^{+}}<U_{y},E(.-y,t)\ast \varphi (.)>=<U,\varphi (.)>,
\end{equation*}%
which gives that $\underset{t\rightarrow 0^{+}}{\lim }U(.)\ast E(.,t)=U$ in $%
\mathcal{D}_{L^{\infty },\left\{ 1\right\} }^{\prime }$. Finally, by $\left( %
\ref{DD1}\right) $ we obtain $\mathit{2}$.

$\mathit{2.\Rightarrow 3.}$ Suppose that there exist $f$, $g\in $ $\mathcal{C%
}_{aa}$ and $P\left( D_{x}\right) $ a $\left\{ j!^{2}\right\} -$%
ultradifferential operator such that $U=P\left( D_{x}^{2}\right) f+g$. For
every sequence $\left( s_{m}\right) _{m\in
\mathbb{N}
}\subset
\mathbb{R}
$, there exists a subsequence $\left( s_{m_{k}}\right) _{k}$ such that $%
\forall x\in
\mathbb{R}
,$%
\begin{equation*}
\lim_{k\rightarrow +\infty }f(x+s_{m_{k}})=F(x)\text{ \ \ and \ \ \ \ \ \ }%
\lim_{k\rightarrow +\infty }F(x-s_{m_{k}})=f(x)\text{,}
\end{equation*}%
\begin{equation*}
\lim_{k\rightarrow +\infty }g(x+s_{m_{k}})=G(x)\text{ \ \ and \ \ \ \ \ \ }%
\lim_{k\rightarrow +\infty }G(x-s_{m_{k}})=g(x)\text{,}
\end{equation*}%
where the functions $F$, $G\in L^{\infty }.$ Then for every $\varphi \in
\mathcal{D}_{L^{1}}^{\left\{ 1\right\} }$, we have%
\begin{equation*}
\lim_{k\rightarrow +\infty }\langle \tau _{s_{m_{k}}}U,\varphi \rangle
=\lim_{k\rightarrow +\infty }\int_{%
\mathbb{R}
}f(x+s_{m_{k}})P(D_{x}^{2})\varphi (x)dx+\lim_{k\rightarrow +\infty }\int_{%
\mathbb{R}
}g(x+s_{m_{k}})\varphi (x)dx,
\end{equation*}%
and using dominated convergence Theorem, it follows that%
\begin{equation*}
\lim_{k\rightarrow +\infty }\langle \tau _{s_{m_{k}}}U,\varphi \rangle
=<V,\varphi >,
\end{equation*}%
where $V$ $:=$ $P(D_{x}^{2})F+G\in \mathcal{D}_{L^{\infty },\left\{
1\right\} }^{\prime }$. In the same way, for every $\varphi \in \mathcal{D}%
_{L^{1}}^{\left\{ 1\right\} }$, we have%
\begin{eqnarray*}
\lim_{k\rightarrow +\infty }\langle \tau _{-s_{m_{k}}}V,\varphi \rangle
&=&\int_{%
\mathbb{R}
}\lim_{k\rightarrow +\infty }F(x-s_{m_{k}})P(D_{x}^{2})\varphi (x)dx+\int_{%
\mathbb{R}
}\lim_{k\rightarrow +\infty }G(x-s_{m_{k}})\varphi (x)dx \\
&=&\langle U,\varphi \rangle \text{,}
\end{eqnarray*}%
i.e. $\lim_{k\rightarrow +\infty }\tau _{-s_{m_{k}}}V=U$ in $\mathcal{D}%
_{L^{\infty },\left\{ 1\right\} }^{\prime }$.

$\mathit{3.\Longrightarrow 4.}$ Let $3$ holds, for every $\varphi \in
\mathcal{D}_{L^{1}}^{\left\{ 1\right\} }$ we have
\begin{equation*}
\lim_{l\rightarrow +\infty }\lim_{k\rightarrow +\infty }\langle \tau
_{-s_{m_{l}}}\tau _{s_{m_{k}}}U,\varphi \rangle =\lim_{l\rightarrow +\infty
}\langle V,\tau _{s_{m_{l}}}\varphi \rangle =\lim_{l\rightarrow +\infty
}\langle \tau _{-s_{m_{l}}}V,\varphi \rangle =\langle U,\varphi \rangle ,
\end{equation*}%
i.e. $\lim_{l\rightarrow +\infty }\lim_{k\rightarrow +\infty }\tau
_{-s_{m_{l}}}\tau _{s_{m_{k}}}U$ $=U$\ in $\mathcal{D}_{L^{\infty },\left\{
1\right\} }^{\prime }$.

$\mathit{4.\rightarrow 5.}$ Let a sequence $\left( s_{m}\right) _{m\in
\mathbb{N}
}\subset
\mathbb{R}
$, with a subsequence $\left( s_{m_{k}}\right) _{k}$ such that ($\ref{eq}$)
holds. For fixed $t>0,$%
\begin{eqnarray*}
\lim_{l\rightarrow +\infty }\lim_{k\rightarrow +\infty }\tau
_{-s_{m_{l}}}\tau _{s_{m_{k}}}u(.,t) &=&\lim_{l\rightarrow +\infty
}\lim_{k\rightarrow +\infty }\langle \tau _{-s_{m_{l}}}\tau
_{s_{m_{k}}}U_{y},E(.-y,t)\rangle \\
&=&\langle U_{y},E(.-y,t)\rangle =u(.,t),
\end{eqnarray*}%
hence by Remark $\ref{limlim}$, the Gauss transform $u(.,t)$ is a classical
almost automorphic as function of $x$.

$\mathit{5.\rightarrow 6.}$ As $U\in \mathcal{\ D}_{L^{\infty },\left\{
1\right\} }^{\prime }$, i.e. $\forall h>0,\exists A>0,\forall j\in
\mathbb{Z}
_{+},\forall x\in
\mathbb{R}
,$
\begin{eqnarray*}
\left\vert \left( U\left( .\right) \ast E(.,t)\right) ^{(j)}(x)\right\vert
&\leq &A\left\Vert E^{(j)}(x-.,t)\right\Vert _{L^{1},h}, \\
&\leq &A\sup_{i\in
\mathbb{Z}
_{+}}\frac{\left\Vert E^{(i+j)}(x-.,t)\right\Vert _{1}}{h^{i}i!}
\end{eqnarray*}%
since $(i+j)!\leq 2^{i+j}i!j!$, we have $\forall k\in
\mathbb{Z}
_{+},$
\begin{equation*}
\left\vert \left( U\left( .\right) \ast E(.,t)\right) ^{(j)}(x)\right\vert
\leq Ah^{j}j!\sup_{k\in
\mathbb{Z}
_{+}}\frac{2^{k}\left\Vert E^{(k)}(x-.,t)\right\Vert _{1}}{h^{k}k!}
\end{equation*}%
from Proposition $1.1$ of \cite{matzu2}, we have there exist $C>0,a\in \left]
0,1\right[ $, where $a$ can be taken as close as desired to $1$, such that
for every $t>0,$
\begin{equation*}
\left\Vert E^{(k)}(x-.,t)\right\Vert _{1}\leq 2\left( \frac{\pi }{a}\right)
^{\frac{1}{2}}\left( Ct^{-\frac{1}{2}}\right) ^{k}k!^{\frac{1}{2}}
\end{equation*}%
Hence, $\forall h>0,\exists A>0,\exists C>0,\exists a\in \left] 0,1\right[
,\forall j\in
\mathbb{Z}
_{+},\forall t>0,$
\begin{eqnarray*}
\left\vert \left( U\left( .\right) \ast E(.,t)\right) ^{(j)}(x)\right\vert
&\leq &2A\left( \frac{\pi }{a}\right) ^{\frac{1}{2}}h^{j}j!\sup_{k\in
\mathbb{Z}
_{+}}\frac{\left( 2Ct^{-\frac{1}{2}}\right) ^{k}}{h^{k}k!^{\frac{1}{2}}} \\
&\leq &2A\left( \frac{\pi }{a}\right) ^{\frac{1}{2}}h^{j}j!\left( \sup_{k\in
\mathbb{Z}
_{+}}\left( \frac{2^{2}C^{2}}{th^{2}}\right) ^{k}\frac{1}{k!}\right) ^{\frac{%
1}{2}} \\
&\leq &2A\left( \frac{\pi }{a}\right) ^{\frac{1}{2}}h^{j}j!\exp \left( \frac{%
2C^{2}}{th^{2}}\right)
\end{eqnarray*}
Consequently, it holds
\begin{equation*}
\left\Vert \left( U\left( .\right) \ast E(.,t)\right) ^{(j)}\right\Vert
_{\infty }\leq B(t)h^{j}j!,
\end{equation*}%
where $B(t)=2A\left( \frac{\pi }{a}\right) ^{\frac{1}{2}}\exp \left( \frac{%
2C^{2}}{th^{2}}\right) ,$ which gives $U(.)\ast E(.,t)\in \mathcal{D}%
_{L^{\infty }}^{\left\{ 1\right\} }$ for every fixed $t>0,$ and as by
assumption we have $U(.)\ast E(.,t)\in \mathcal{C}_{aa}$ for every fixed $%
t>0,$ it follows that $U(.)\ast E(.,t)\in \mathcal{B}_{aa}^{\left\{
1\right\} }$ by Proposition $\ref{baa1}$-$\left( \ref{baa12}\right) .$

For every $n\in
\mathbb{N}
,$ the function $\varphi _{n}=U(.)\ast E(.,\frac{1}{n})\in \mathcal{B}%
_{aa}^{\left\{ 1\right\} }$, due to Lemma $\ref{cv}$-$\left( \ref{cv3}%
\right) $, we have that for every $\psi \in \mathcal{D}_{L^{1}}^{\left\{
1\right\} },$
\begin{equation*}
\lim_{n\rightarrow +\infty }<U(.)\ast E(.,\frac{1}{n}),\psi
(.)>=\lim_{n\rightarrow +\infty }<U_{y},E(.-y,\frac{1}{n})\ast \psi
(.)>=<U,\psi (.)>,
\end{equation*}%
which gives that $\underset{n\rightarrow +\infty }{\lim }\varphi _{n}=$ $U$
in $\mathcal{D}_{L^{\infty },\left\{ 1\right\} }^{\prime }.$

$\mathit{6.\rightarrow 1.}$ For any bounded subset $B\subset \mathcal{D}%
_{L^{1}}^{\left\{ 1\right\} }$ by hypothesis, we have
\begin{equation*}
\sup_{\psi \in B}\left\vert <\varphi _{n}-U,\psi >\right\vert \underset{%
n\rightarrow +\infty }{\rightarrow }0.
\end{equation*}%
It is clear that for every $\varphi \in \mathcal{F\subset }L^{1},$ we have $%
\varphi _{n}\ast \varphi \in \mathcal{B}_{aa}^{\left\{ 1\right\} }$ by
Proposition $\ref{Baa1}$-$\left( \ref{Baa12}\right) ,$ and the set $%
B:=\left\{ \tau _{-x}\check{\varphi}:x\in
\mathbb{R}
\right\} $ is a bounded in $\mathcal{D}_{L^{1}}^{\left\{ 1\right\} },$ where
$\check{\varphi}(y)=\varphi (-y),$ then
\begin{eqnarray*}
\sup_{x\in
\mathbb{R}
}\left\vert \left( \varphi _{n}\ast \varphi \right) (x)-\left( U\ast \varphi
\right) (x)\right\vert &=&\sup_{x\in
\mathbb{R}
}\left\vert <\varphi _{n}-U,\tau _{-x}\check{\varphi}>\right\vert , \\
&=&\sup_{\psi \in B}\left\vert <\varphi _{n}-U,\psi >\right\vert \underset{%
n\rightarrow +\infty }{\longrightarrow }0,
\end{eqnarray*}%
i.e. the sequence $\left( \varphi _{n}\ast \varphi \right) _{n}\subset
\mathcal{B}_{aa}^{\left\{ 1\right\} }$ converge uniformly to $U\ast \varphi $
as $n\rightarrow +\infty $ on $%
\mathbb{R}
,$ by Proposition $\ref{pro}$-$\left( \ref{pro1}\right) $, $U\ast \varphi
\in \mathcal{C}_{aa}$, $\forall \varphi \in \mathcal{F}.$
\end{proof}

\begin{definition}
A bounded hyperfunction $U$ is said almost automorphic if it satisfies any $%
\left( \text{hence every}\right) $ assertion of the above Theorem. We denote
by $\mathcal{F}_{aa}^{\prime }$ the space of all almost automorphic
hyperfunctions on $%
\mathbb{R}
$.
\end{definition}

\begin{example}
We have $\mathcal{F}_{ap}^{\prime }$ $\subsetneq \mathcal{F}_{aa}^{\prime }$%
, where $\mathcal{F}_{ap}^{\prime }$ denotes the space of almost periodic
hyperfunctions introduced in \cite{Fap}. Indeed, it is well known that $%
\mathcal{C}_{ap}\varsubsetneq \mathcal{C}_{aa}$, it follows $\mathcal{F}%
_{ap}^{\prime }$ $\subset \mathcal{F}_{aa}^{\prime }$. Let $f$ be an almost
automorphic function which is not almost periodic, and let $P(D_{x})$ be a $%
\left\{ j!^{2}\right\} -$ultradifferential operator, then $P\left(
D_{x}^{2}\right) f\in \mathcal{F}_{aa}^{\prime }\backslash \mathcal{F}%
_{ap}^{\prime }.$
\end{example}

We summarize the main properties of $\mathcal{F}_{aa}^{\prime }$.

\begin{proposition}
\label{proFaa}

\begin{enumerate}
\item \label{aaaa}The space $\mathcal{F}_{aa}^{\prime }$ is stable under $%
\left\{ j!\right\} -$ultradifferential operators and translations.

\item $\mathcal{B}_{aa}^{\left\{ 1\right\} }\times \mathcal{F}_{aa}^{\prime
}\subset \mathcal{F}_{aa}^{\prime }.$

\item \label{aa1}$\mathcal{F}_{aa}^{\prime }$ $\ast \mathcal{D}%
_{L^{1},\left\{ 1\right\} }^{\prime }\subset \mathcal{F}_{aa}^{\prime }.$

\end{enumerate}
\end{proposition}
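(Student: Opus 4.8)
The plan is to prove the three assertions of Proposition \ref{proFaa} by exploiting the characterisation of $\mathcal{F}_{aa}^{\prime}$ given in Theorem \ref{thm*}, choosing in each case the equivalent form of almost automorphy that is most convenient for the operation at hand. Throughout, the natural handle is assertion \ref{th6} (approximation by a sequence $\left(\varphi_{n}\right)_{n}\subset\mathcal{B}_{aa}^{\left\{1\right\}}$ converging in $\mathcal{D}_{L^{\infty},\left\{1\right\}}^{\prime}$) together with assertion \ref{th1} (convolution with every $\varphi\in\mathcal{F}$ lands in $\mathcal{C}_{aa}$), since these transfer cleanly under continuous linear operations.

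For assertion \ref{aaaa}, let $U\in\mathcal{F}_{aa}^{\prime}$ and pick $\left(\varphi_{n}\right)_{n}\subset\mathcal{B}_{aa}^{\left\{1\right\}}$ with $\varphi_{n}\to U$ in $\mathcal{D}_{L^{\infty},\left\{1\right\}}^{\prime}$. For translations, $\tau_{h}$ is a continuous linear self-map of $\mathcal{D}_{L^{\infty},\left\{1\right\}}^{\prime}$ and, by Proposition \ref{Baa1}-(1), $\tau_{h}\varphi_{n}\in\mathcal{B}_{aa}^{\left\{1\right\}}$; hence $\tau_{h}\varphi_{n}\to\tau_{h}U$ in $\mathcal{D}_{L^{\infty},\left\{1\right\}}^{\prime}$, which by assertion \ref{th6} gives $\tau_{h}U\in\mathcal{F}_{aa}^{\prime}$. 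For a $\left\{j!\right\}$-ultradifferential operator $P(D_{x})$, Remark \ref{d}-(\ref{P2}) (or Remark \ref{DD}-(\ref{D4})) says $P(D_{x})$ is a continuous linear self-map of $\mathcal{D}_{L^{\infty},\left\{1\right\}}^{\prime}$, and $P(D_{x})\varphi_{n}\in\mathcal{B}_{aa}^{\left\{1\right\}}$ because $\mathcal{B}_{aa}^{\left\{1\right\}}$ is stable under derivations (Proposition \ref{Baa1}-(1)) and the defining estimates of $\mathcal{B}_{aa}^{\left\{1\right\}}$ are preserved under $\left\{j!\right\}$-ultradifferential operators exactly as in the proof of Proposition \ref{P(D)ul}. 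So $P(D_{x})\varphi_{n}\to P(D_{x})U$ in $\mathcal{D}_{L^{\infty},\left\{1\right\}}^{\prime}$ and assertion \ref{th6} finishes it.

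For assertion (2), take $a\in\mathcal{B}_{aa}^{\left\{1\right\}}$ and $U\in\mathcal{F}_{aa}^{\prime}$. I would test the product against $\varphi\in\mathcal{F}$ using assertion \ref{th1}: by definition of the product of a bounded hyperfunction with a function in $\mathcal{B}_{aa}^{\left\{1\right\}}$ one has $\left(aU\right)\ast\varphi$ expressible through convolutions $U\ast\psi$ with $\psi$ built from $\varphi$ and the derivatives of $a$; the multiplication $a\cdot$ is a continuous self-map of $\mathcal{D}_{L^{1}}^{\left\{1\right\}}$ since $a\in\mathcal{D}_{L^{\infty}}^{\left\{1\right\}}$, and products and $L^{1}$-convolutions of $\mathcal{C}_{aa}$-functions stay in $\mathcal{C}_{aa}$ by Proposition \ref{pro}-(\ref{pro1}),(\ref{pro4}). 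Alternatively, and more transparently, use assertion \ref{th6}: with $\varphi_{n}\to U$ and $a\varphi_{n}\in\mathcal{B}_{aa}^{\left\{1\right\}}$ by Proposition \ref{Baa1}-(1) (algebra structure), the map $U\mapsto aU$ is continuous on $\mathcal{D}_{L^{\infty},\left\{1\right\}}^{\prime}$ (transpose of $a\cdot$ on $\mathcal{D}_{L^{1}}^{\left\{1\right\}}$), so $a\varphi_{n}\to aU$ and $aU\in\mathcal{F}_{aa}^{\prime}$.

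For assertion \ref{aa1}, let $U\in\mathcal{F}_{aa}^{\prime}$ and $T\in\mathcal{D}_{L^{1},\left\{1\right\}}^{\prime}$. The cleanest route is again assertion \ref{th1}: for $\varphi\in\mathcal{F}$, associativity of convolution gives $\left(U\ast T\right)\ast\varphi=U\ast\left(T\ast\varphi\right)$, and $T\ast\varphi\in\mathcal{F}$ (or at least in a space on which assertion \ref{th1} applies after a further approximation) because convolving the bounded-hyperfunction-type object $T$ against a test function in $\mathcal{F}$ produces a function with the required global analytic-type growth — this is where one invokes the mapping properties of $\mathcal{F}$ under convolution recorded in the references \cite{Perio, Pili2} and already used implicitly in Lemma \ref{cv}. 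Then $U\ast\left(T\ast\varphi\right)\in\mathcal{C}_{aa}$ by assertion \ref{th1} for $U$, giving $U\ast T\in\mathcal{F}_{aa}^{\prime}$. I expect the main obstacle to be precisely this last point: making rigorous that $T\ast\varphi$ lands in $\mathcal{F}$ (equivalently, justifying the associativity $\left(U\ast T\right)\ast\varphi=U\ast\left(T\ast\varphi\right)$ in the hyperfunction setting), which requires the convolution calculus for $L^{1}$- and $L^{\infty}$-hyperfunctions from \cite{Pili2} — the same Corollary 5.10 already cited in the proof of Theorem \ref{thm*}. Once that associativity is in hand, all three parts reduce to the transfer principle "continuous linear operations that preserve $\mathcal{B}_{aa}^{\left\{1\right\}}$ (resp. $\mathcal{C}_{aa}$) preserve $\mathcal{F}_{aa}^{\prime}$" supplied by Theorem \ref{thm*}.
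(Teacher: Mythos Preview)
Your arguments for assertions (1) and (2) are correct. For (2) you use exactly the paper's route via Theorem~\ref{thm*}-(\ref{th6}). For (1) you also go through (\ref{th6}), showing $P(D_x)\varphi_n\in\mathcal{B}_{aa}^{\{1\}}$ and passing to the limit; the paper instead uses (\ref{th1}) directly, writing $P(D_x)U\ast\varphi=U\ast P(D_x)\varphi$ with $P(D_x)\varphi\in\mathcal{F}$ (Remark~\ref{d}-(\ref{P2})) and $\tau_hU\ast\varphi=\tau_h(U\ast\varphi)$. Both are valid; your version needs the extra (but true) fact that a $\{j!\}$-ultradifferential operator preserves $\mathcal{B}_{aa}^{\{1\}}$, which you correctly note follows by the estimates in Proposition~\ref{P(D)ul} together with uniform convergence of the series in $\mathcal{C}_{aa}$.

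For assertion (\ref{aa1}) there is a genuine gap. Your key claim, that $T\ast\varphi\in\mathcal{F}$ for $T\in\mathcal{D}_{L^1,\{1\}}^{\prime}$ and $\varphi\in\mathcal{F}$, is \emph{false} in general: already for $T=v\in L^{1}$, the convolution $v\ast\varphi$ need not decay exponentially, since $\int|v(y)|e^{k|y|}\,dy$ can diverge. What is true is only $T\ast\varphi\in\mathcal{D}_{L^{1}}^{\{1\}}$, and Theorem~\ref{thm*}-(\ref{th1}) as stated does not cover that space. Your parenthetical escape hatch (``after a further approximation'') can be made to work---approximate $T\ast\varphi$ by elements of $\mathcal{F}$ using the density in Remark~\ref{RMK}-(\ref{rmk}) and check uniform convergence of $U\ast\psi_m$ via translation-invariance of the $\mathcal{D}_{L^{1},h}^{\{1\}}$-norm---but you have not carried this out, and it is the crux of the matter.

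The paper sidesteps this difficulty entirely: it invokes the structure theorem for $\mathcal{D}_{L^{1},\{1\}}^{\prime}$ (Theorem~3.4 of \cite{Pili2}) to write $V=Q(D_x)v$ with $v\in L^{1}$ and $Q$ a $\{j!\}$-operator, combines this with $U=P(D_x^2)f+g$ from Theorem~\ref{thm*}-(\ref{th2}), and obtains
\[
U\ast V=Q(D_x)\bigl[P(D_x^2)(f\ast v)+g\ast v\bigr],
\]
where $f\ast v,\,g\ast v\in\mathcal{C}_{aa}$ by Proposition~\ref{pro}-(\ref{pro4}); the bracket is then in $\mathcal{F}_{aa}^{\prime}$ by (\ref{th2}), and part (1) finishes. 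This route never needs $T\ast\varphi\in\mathcal{F}$.
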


\begin{proof}
$\mathit{1.}$ Let $U\in \mathcal{F}_{aa}^{\prime }$ and $P\left(
D_{x}\right) $ be a $\left\{ j!\right\} -$ultradifferential operator, due to
Remark $\ref{DD}$-$\left( \ref{D4}\right) $, we have $P\left( D_{x}\right)
U\in \mathcal{D}_{L^{\infty },\left\{ 1\right\} }^{\prime }$. By Corollary $%
5.10$ of \cite{Pili2}, we have
\begin{equation*}
\text{ }P\left( D_{x}\right) U\left( .\right) \ast \varphi =U\left( .\right)
\ast P\left( D_{x}\right) \varphi ,\varphi \in \mathcal{F}\subset \mathcal{D}%
_{L^{1}}^{\left\{ 1\right\} },
\end{equation*}%
hence by Remark $\ref{d}-\left( \ref{P2}\right) $and Theorem $\ref{thm*}$-$%
\left( \ref{th1}\right) $, it holds that $P\left( D_{x}\right) U\ast \varphi
\in \mathcal{C}_{aa}$, $\forall \varphi \in \mathcal{F},$ so $P\left(
D_{x}\right) U\in \mathcal{F}_{aa}^{\prime }.$

Let $U\in \mathcal{F}_{aa}^{\prime },$ then for every $\varphi \in \mathcal{F%
}$, $\tau _{h}U\ast \varphi =\tau _{h}\left( U\ast \varphi \right) $, for
every $h\in
\mathbb{R}
$. As $U\ast \varphi \in \mathcal{C}_{aa}$ and the space $\mathcal{C}_{aa}$
is invariant by translations, then $\tau _{h}\left( U\ast \varphi \right)
\in \mathcal{C}_{aa}$, so $\tau _{h}U\in \mathcal{F}_{aa}^{\prime }$, $%
\forall h\in
\mathbb{R}
$.

$\mathit{2.}$ Let $\psi \in \mathcal{B}_{aa}^{\left\{ 1\right\} }$ and $U\in
$ $\mathcal{F}_{aa}^{\prime },$ then there exists a sequence $\left( \varphi
_{n}\right) _{n}\subset \mathcal{B}_{aa}^{\left\{ 1\right\} }$ converging to
$U$ in $\mathcal{D}_{L^{\infty },\left\{ 1\right\} }^{\prime }$. Define $%
\psi _{n}:=\psi \varphi _{n}\subset \mathcal{B}_{aa}^{\left\{ 1\right\} },$
we have for every $\varphi \in \mathcal{D}_{L^{1}}^{\left\{ 1\right\} },$%
\begin{equation*}
\lim_{n\rightarrow +\infty }<\psi \varphi _{n},\varphi >=<\psi U,\varphi >,
\end{equation*}%
which gives that $\underset{n\rightarrow +\infty }{\lim }\psi _{n}=$ $\psi U$
in $\mathcal{D}_{L^{\infty },\left\{ 1\right\} }^{\prime }.$ Consequently,
by Theorem $\ref{thm*}$-$\left( \ref{th6}\right) $, it holds that $\psi U\in
\mathcal{F}_{aa}^{\prime }$ $.$

$\mathit{3.}$ Let $U\in \mathcal{F}_{aa}^{\prime }$, then there exist $f$, $%
g $ $\in $ $\mathcal{C}_{aa}$ and a $\left\{ j!^{2}\right\} -$%
ultradifferential operator $P\left( D_{x}\right) $ such that $U=P\left(
D_{x}^{2}\right) f+g$ in $\mathcal{D}_{L^{\infty },\left\{ 1\right\}
}^{\prime }$. From Theorem $3.4$ of \cite{Pili2}$,$ we have if $V\in
\mathcal{D}_{L^{1},\left\{ 1\right\} }^{\prime }$, then there exist $v\in $ $%
L^{1\text{ }}$and a $\left\{ j!\right\} -$ultradifferential operator $%
Q\left( D_{x}\right) $ such that $V=Q\left( D_{x}\right) v$ in $\mathcal{D}%
_{L^{1},\left\{ 1\right\} }^{\prime }.$ Consequently, and in view of
Corollary $5.10$ of \cite{Pili2}, we have%
\begin{eqnarray*}
U\ast V &=&P(D_{x}^{2})f\ast Q(D_{x})v+g\ast Q(D_{x})v, \\
&=&P(D_{x}^{2})Q(D_{x})(f\ast v)+Q(D_{x})(g\ast v), \\
&=&Q(D_{x})\left[ P(D_{x}^{2})(f\ast v)+(g\ast v)\right] ,
\end{eqnarray*}%
so, as $f\ast v$ and $g\ast v$ belong to $\mathcal{C}_{aa}$ by Proposition $%
\ref{pro}$-$\left( \ref{pro4}\right) $, due to Theorem $\ref{thm*}$-$\left( %
\ref{th2}\right) ,$ it holds that $P\left( D_{x}^{2}\right) \left( f\ast
v\right) +g\ast v\in $ $\mathcal{F}_{aa}^{\prime }.$ Then from the assertion
$1,$ we have $U\ast V\in \mathcal{F}_{aa}^{\prime }.$

\end{proof}

\begin{proposition}
A primitive of an almost automorphic hyperfunction is
almost automorphic if and only if it is bounded.
\end{proposition}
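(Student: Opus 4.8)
**Proof proposal for: A primitive of an almost automorphic hyperfunction is almost automorphic if and only if it is bounded.**

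The plan is to reduce this to the corresponding statement for classical almost automorphic functions, Proposition \ref{pro}-(\ref{pro6}), via the Gauss transform characterisation (assertion 5 of Theorem \ref{thm*}). First I would fix the set-up: let $U\in\mathcal{F}_{aa}^{\prime}$ and let $W$ be a primitive of $U$, meaning $W\in\mathcal{D}_{L^{\infty},\{1\}}^{\prime}$ with $W'=U$ in $\mathcal{D}_{L^{\infty},\{1\}}^{\prime}$. (One must note that a bounded hyperfunction always admits a primitive which is again a bounded hyperfunction, in parallel with the classical fact; this is where I expect to have to quote a structure-theorem-type result such as Theorem 3.4 of \cite{Pili2}, or simply define $W$ by integration at the level of the defining functions $f,g$ in assertion 2.) The trivial direction is: if $W\in\mathcal{F}_{aa}^{\prime}$ then $W\in\mathcal{D}_{L^{\infty},\{1\}}^{\prime}$ is a bounded hyperfunction by definition, so "bounded" is automatic.

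For the substantive direction, suppose $W$ is a bounded hyperfunction with $W'=U\in\mathcal{F}_{aa}^{\prime}$; I must show $W\in\mathcal{F}_{aa}^{\prime}$. I would pass to Gauss transforms: write $w(x,t)=W(\cdot)\ast E(\cdot,t)$ and $u(x,t)=U(\cdot)\ast E(\cdot,t)$. Since differentiation commutes with convolution by the heat kernel, $\partial_x w(x,t)=u(x,t)$ for every $t>0$. By Theorem \ref{thm*}-(5) applied to $U$, the function $x\mapsto u(x,t)$ is classically almost automorphic for each fixed $t>0$; and since $W$ is a bounded hyperfunction, $x\mapsto w(x,t)$ is a bounded $C^{\infty}$ function on $\mathbb{R}$ for each $t>0$ (indeed $w(\cdot,t)\in\mathcal{D}_{L^{\infty}}^{\{1\}}$ by the estimate already established in the proof of $5.\Rightarrow 6.$). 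Moreover $\partial_x w(\cdot,t)=u(\cdot,t)$ is bounded, hence $w(\cdot,t)$ is Lipschitz, so its derivative is uniformly continuous; therefore Proposition \ref{pro}-(\ref{pro6}) (a primitive of an almost automorphic function is almost automorphic iff it is bounded) applies to give $w(\cdot,t)\in\mathcal{C}_{aa}$ for every fixed $t>0$. That is exactly assertion 5 of Theorem \ref{thm*} for $W$, so $W\in\mathcal{F}_{aa}^{\prime}$, completing the equivalence.

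The main obstacle I anticipate is purely a matter of bookkeeping rather than depth: making precise what "primitive" means at the level of bounded hyperfunctions and checking that $w(\cdot,t)$ genuinely is bounded and smooth with $\partial_x w(\cdot,t)=u(\cdot,t)$. The smoothness and the $\mathcal{D}_{L^{\infty}}^{\{1\}}$-estimate are already contained verbatim in the proof of Theorem \ref{thm*} ($5.\Rightarrow 6.$) applied to $W$ in place of $U$, so I would simply invoke that. The commutation $\partial_x(W\ast E)=(W')\ast E=U\ast E$ for $t>0$ follows since $E(\cdot,t)\in\mathcal{F}$ and $\partial_x$ acts continuously on hyperfunctions (it is a $\{j!\}$-ultradifferential operator, Remark \ref{d}-(\ref{P2})). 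Once these routine points are dispatched, the heart of the argument is the single application of the classical result Proposition \ref{pro}-(\ref{pro6}) to the slice $w(\cdot,t)$, so the hyperfunction statement is genuinely a corollary of the classical one through the Gauss transform.
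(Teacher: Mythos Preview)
Your proof is correct and follows essentially the same idea as the paper: reduce to the classical Bohl--Bohr type result (Proposition~\ref{pro}-(\ref{pro6})) by convolving with a test function and observing that the convolution of the primitive is a bounded primitive of a classical almost automorphic function. The only difference is which characterisation from Theorem~\ref{thm*} you choose: the paper uses assertion~(\ref{th1}) directly---for every $\varphi\in\mathcal{F}$, $V\ast\varphi$ is a bounded primitive of $U\ast\varphi\in\mathcal{C}_{aa}$, hence $V\ast\varphi\in\mathcal{C}_{aa}$---while you specialise to $\varphi=E(\cdot,t)$ and invoke assertion~(5). Since $E(\cdot,t)\in\mathcal{F}$, your route is a particular instance of the paper's, passed back through the equivalence of the characterisations; using (\ref{th1}) is marginally cleaner because it avoids the detour through the heat kernel estimates. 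Two small points: the Lipschitz/uniform-continuity remark is unnecessary, since Proposition~\ref{pro}-(\ref{pro6}) only requires boundedness of the primitive; and the parenthetical about existence of a bounded primitive is not needed, as the statement assumes a primitive is given and asks when it is almost automorphic.
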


\begin{proof}
If $V\in \mathcal{F}_{aa}^{\prime }$ is a primitive of $U\in
\mathcal{F}_{aa}^{\prime }$, then $V\in \mathcal{D}_{L^{\infty },\left\{
1\right\} }^{\prime }.$ Conversely, let $V\in \mathcal{D}_{L^{\infty
},\left\{ 1\right\} }^{\prime }$ be a primitive of $U\in \mathcal{F}%
_{aa}^{\prime },$ then $V\ast \varphi \in L^{\infty }$, for every $\varphi
\in $ $\mathcal{F}$, and we have
\begin{equation*}
\left( V\ast \varphi \right) ^{\prime }=V^{\prime }\ast \varphi =U\ast
\varphi \in \mathcal{C}_{aa},\forall \varphi \in \mathcal{F},
\end{equation*}%
i.e. $V\ast \varphi $ is a bounded primitive of the almost automorphic
function $U\ast \varphi .$ Thus by Proposition $\ref{pro}$-$\left( \ref{pro6}%
\right) $, $V\ast \varphi \in \mathcal{C}_{aa}$, $\forall \varphi \in
\mathcal{F}$, so $V\in \mathcal{F}_{aa}^{\prime }.$
\end{proof}

\begin{remark}
The result of Proposition is an extension to almost automorphic hyperfunctions of the classical result of
Bohl-Bohr on primitives for almost periodic functions and almost automorphic distributions.
\end{remark}

\end{document}